\DeclareMathAlphabet{\mathcal}{OMS}{cmsy}{m}{n}
\DeclareSymbolFont{largesymbols}{OMX}{cmex}{m}{n}
\newtheorem{Def}{Definition}[section]
\newtheorem{Prop}[Def]{Proposition}
\newtheorem{Theo}[Def]{Theorem}
\newtheorem{Lem}[Def]{Lemma}
\newtheorem{Koro}[Def]{Corollary}
\newtheorem{Remark}[Def]{Remark}
\newtheorem{exm}[Def]{Example}
\newcommand{\defCategory}[2]{
  \newcommand{#1}{#2\defvariable}
  }
\newcommand{\defvariable}[2][]{
\if\relax\detokenize{#1}\relax  
   \if\relax\detokenize{#2}\relax
    \else
    ({#2})
    \fi
\else
  ^{{#1}}({#2})
\fi
 }
\def\cpx#1{{#1}^{\bullet}}
 \defCategory{\C}{\mathscr{C}}
 \defCategory{\K}{\mathscr{K}}
 \defCategory{\D}{\mathscr{D}}
 \def\Kb#1{\K[b]{#1}}
 \def\Kf#1{\K[-]{#1}}
 \def\Kz#1{\K[+]{#1}}
 \def\Db#1{\D[b]{#1}}
 \def\Df#1{\D[-]{#1}}
 \def\Dz#1{\D[+]{#1}}
 \def\E{{\rm E}}
 \DeclareMathOperator{\add}{add}
 \DeclareMathOperator{\Add}{Add}
 \DeclareMathOperator{\Hom}{Hom}
 \DeclareMathOperator{\HOM}{\mathscr{H}\!\!{\it om}}
 \DeclareMathOperator{\otimesL}{\stackrel{\mathbf{L}}{\otimes}}
 \DeclareMathOperator{\thick}{thick}
 \DeclareMathOperator{\cone}{cone}
 \DeclareMathOperator{\per}{per}
\def\Hom{{\rm Hom}}
\def\HOM{{\mathscr{H}\!\!{\it om}}}
\def\rHom{{\bf R}\HOM}
\def\RHom{{\bf R}\Hom}
\def\otimesL{\stackrel{\bf L}{\otimes}}
\def\Tria{{\rm Tria}}
\def\rModcat#1{{\rm Mod}\mbox{-}#1}
\def\modcat#1{#1\mbox{-}{\rm mod}}
\def\rGr#1{{\rm Gr}\mbox{-}#1}
\def\rModcat#1{{\rm Mod}\mbox{-}#1}
\def\rmodcat#1{{\rm mod}\mbox{-}#1}
\def\rpModcat#1{{\rm Proj}\mbox{-}#1}
\def\rpmodcat#1{{\rm proj}\mbox{-}#1}
\def\rstmodcat#1{{\underline{\rm mod}}\mbox{-}#1}
\newcommand{\lra}{\longrightarrow}
\newcommand{\ra}{\rightarrow}
\newcommand{\hra}{\hookrightarrow}
\newcommand{\lraf}[1]{\stackrel{#1}{\lra}}
\newcommand{\hraf}[1]{\stackrel{#1}{\hra}}
\title{Differential graded endomorphism algebras, cohomology rings and derived equivalences}
\author{\sc SHENGYONG PAN, ZHEN PENG and JIE ZHANG$^*$}
\date{}
\begin{document}
\maketitle

\begin{abstract}
 In this paper, we will consider derived equivalences for differential graded endomorphism algebras by Keller's approaches. First we construct derived equivalences of differential graded algebras which are endomorphism algebras of the objects  from a triangle in the homotopy category of differential graded algebras.
We also obtain derived equivalences of differential graded endomorphism algebras from a standard derived equivalence of finite dimensional algebras. Moreover, under some conditions, the cohomology rings of these differential graded endomorphism algebras are also derived equivalent. Then we give an affirmative answer to a problem of Dugas \cite{Dugas2015} in some special case.

\end{abstract}

\renewcommand{\thefootnote}{\alph{footnote}}
\setcounter{footnote}{-1} \footnote{ $^*$ Corresponding author.
Email: jiezhang@bit.edu.cn.}
\renewcommand{\thefootnote}{\alph{footnote}}
\setcounter{footnote}{-1} \footnote{2010 Mathematics Subject
Classification: primary 18E30, 16G10;16S10,18G15.}
\renewcommand{\thefootnote}{\alph{footnote}}
\setcounter{footnote}{-1} \footnote{Keywords: derived equivalence; differential graded endomorphism algebra; cohomology ring; standard derived equivalence.}


\section{Introduction}

Derived equivalences were introduced by Grothendieck and Verdier and play an important
role nowadays in many branches of
algebraic geometry, in algebraic analysis, noncommutative algebraic geometry, representation theory, mathematical physics.
They capture much of the homological information of an abelian category. For example, the Grothendieck group of an abelian group
is preserved under derived equivalences, and derived equivalent algebras have the same Hochschild cohomology.

Important explicit derived equivalences were constructed by Happel \cite{Happel1988} in tilting theory and by
Rickard \cite{Rickard1989a} in tilting complexes. Keller's ICM talk \cite{Keller2006}, for instance, justifies
derived equivalences by the unbounded derived category of differential graded categories \cite{Keller1994}.
There are two natural directions of research: How to give systematic methods for constructing derived
equivalences, and how to produce new derived equivalences
from given ones.

Recently, Hu and Xi constructed  \cite{Hu2013} the derived equivalence for $\Phi$-Auslander-Yoneda algebras.
We want to mention that derived equivalence for $\Phi$-Cohen-Macaulay Auslander-Yoneda algebras  were also constructed in \cite{Pan2014b}. Recall that the $\Phi$-Auslander-Yoneda algebras \cite[Section 3]{Hu2013}, include, for instance, Auslander algebras, generalized
Yoneda algebras and certain trivial extensions. By the definition, $\Phi$-Auslander-Yoneda algebras can be characterized as the
cohomology rings of some dg algebras.  Hence, it is important to
consider derived equivalences for some dg algebras.
It is due to Dugas \cite[Section 7]{Dugas2015}, who proposed the following problem.\\

\noindent{\bf Problem.} Let $\Lambda$ and $\Gamma$ be differential graded algebras. If the cohomology rings $H^*(\Lambda)$ and $H^*(\Gamma)$ are derived equivalent,
are the differential graded algebras $\Lambda$ and $\Gamma$  derived equivalent or not.

Motivated by the above problem, we look for in this paper a general way to get derived equivalences of differential graded algebras in two directions.

By considering a triangle
$$X\longrightarrow Y\longrightarrow Z\longrightarrow X[1]$$
in the homotopy category $\K{A}$ of a differential graded algebra $A$, we show that there is a derived equivalence between differential graded endomorphism algebras
$\HOM(X\oplus Y ,X\oplus Y)$ and $\HOM(Y\oplus Z,Y\oplus Z)$.
See Theorem \ref{4.1} for more details. Moreover, under some mild conditions, we get the cohomology rings of these differential graded algebra are also derived equivalent. This gives
an affirmative answer to the above problem in some special case.
Note that it is still an open question whether a derived equivalence of the cohomology rings implies the differential graded algebras are derived equivalent.

We also study a standard derived equivalence $F:\Db{\rmodcat{B}}\lra \Db{\rmodcat{C}}$
of two finite dimensional algebras $B$ and $C$. By considering the additive functor $\underline{F}: \rstmodcat{B}\ra\rstmodcat{C}$ introduced in \cite{Hu2010,Hu2017},
we get a derived equivalence between differential graded algebras
$\RHom_{B}(B\oplus X,B\oplus X)$ and $\RHom_{C}(C\oplus\underline{F}(X),C\oplus\underline{F}(X)),$
where $X$ is a $B$-module (See Theorem \ref{5.2}). If $X\in {}^{\perp}B$, then the cohomology rings $H^{\mathbb{Z}}(A\oplus X)$  and $H^{\mathbb{Z}}(B\oplus\underline{F}(X))$ of $\RHom_{A}(A\oplus X,A\oplus X)$ and $\RHom_{B}(B\oplus\underline{F}(X),B\oplus\underline{F}(X))$, respectively, are derived
equivalent by \cite[Theorem 1.1]{Pan2014b}. This gives
an affirmative answer to a question of Pan \cite[Section 4]{Pan2014b} which is in spirit to a question of Dugas \cite{Dugas2015}.

\smallskip
This paper is organized as follows. In Section \ref{section-preliminaries}, we recall some basic definitions and facts of derived categories and derived equivalences.
Section \ref{dg modules} is devoted to collecting differential graded modules, their derived categories and studying the differential graded bimodules for later use. The construction of derived equivalences from the triangles in the homotopy category of differential graded algebras will be given in Section \ref{triangles induced derived euivalences}. Finally, in Section \ref{section 5}, we construct derived equivalences for differential graded algebras from standard derived equivalences of finite dimensional algebras.

\section{Preliminaries}\label{section-preliminaries}
In this section, we recall some basic definitions and collect some basic facts of derived categories and derived equivalences.

In this paper, we fix a commutative ring $k$ with identity. All algebras are $k$-algebras, and functors are $k$-functors.  The composite of two morphisms $f: X\ra Y$ and $g: Y\ra Z$ in a category ${\cal C}$ will be denoted by $gf$. If $f: X\ra Y$ is a map between two sets, then the image of an element  $x\in X$ will be denoted by $f(x)$. The composite of two functors $F: {\cal C}\ra {\cal D}$ and $G: {\cal D}\ra {\cal E}$ will be denoted by $GF$. For each object $X$ in ${\cal C}$, we write $F(X)$ for the corresponding object in ${\cal D}$, and for each morphism $f: X\ra Y$ in ${\cal C}$ we write $F(f)$ for the corresponding morphism in ${\cal D}$ from $F(X)$ to $F(Y)$.

For an object $M$ in a $k$-category ${\cal C}$, we use $\add(M)$ to denote the full subcategory of ${\cal C}$ consisting of direct summands of finite direct sums of copies of $M$. If ${\cal C}$ admits infinite coproducts, then $\Add(M)$ means the full subcategory of ${\cal C}$ consisting of direct summands of all coproducts of copies of $M$.

\medskip
 For generality, we shall consider arbitrary $k$-algebras.  All modules will be unitary right modules. Let $A$ be a $k$-algebra. The category of all right $A$-modules will be denoted by $\rModcat{A}$.  We use $\rmodcat{A}$ to denote the full subcategory of $\rModcat{A}$ consisting of  finitely presented $A$-modules, that is, $A$-modules $X$ admitting a projective presentation $P_1\ra P_0\lra X\ra 0$ with $P_i$ finitely generated projective for $i=0, 1$. The category of all projective $A$-modules is denoted by $\rpModcat{A}$, and the category of all finitely generated projective $A$-modules is denoted by $\rpmodcat{A}$.

\medskip

 Let $A$ be a $k$-algebra,  a complex $\cpx{X}$ over $A$ is a sequences $d_X^i$ between $A$-modules
 $X^i$:
 $$\cdots\lra X^{i-1}\lraf{d_X^{i-1}}X^i\lraf{d_X^i}X^{i+1}\lraf{d_X^{i+1}}\cdots$$
 such that $d_X^id_X^{i+1}=0$ for all $i\in\mathbb{Z}$. The category of complexes of $A$-modules, in which morphisms are chain maps, is denoted by $\C{\rModcat{A}}$, and the corresponding homotopy category is denoted by $\K{\rModcat{A}}$. We write $\D{\rModcat{A}}$ for the derived category of $\rModcat{A}$.
We also write $\Kb{\rModcat{A}}$, $\Kf{\rModcat{A}}$ and $\Kz{\rModcat{A}}$ for the full subcategories of $\K{\rModcat{A}}$ consisting of bounded complexes, complexes bounded above, and complexes bounded below, respectively. Denote by $\Db{\rModcat{A}}$, $\Df{\rModcat{A}}$ and $\Dz{\rModcat{A}}$  the full subcategories of $\D{\rModcat{A}}$ consisting of bounded complexes, complexes bounded above, and complexes bounded below, respectively.

Note that there is a fully-faithful functor $\rModcat{A}\lra \D{\rModcat{A}}$ by viewing an $A$-module as a complex in $\D{\rModcat{A}}$ concentrated in degree zero.

\medskip
The homotopy category of an additive category, and the derived category of an abelian category are both triangulated categories.  For basic facts on triangulated categories, we refer to Neeman's book \cite{Neeman2001}.  The shift functor of a triangulated category will be denoted by $[1]$ in this paper.  In the homotopy category, or the derived category of an abelian category, the shift functor acts on a complex by moving the complex to the left by one degree, and changing the sign of the differentials.

\smallskip
Two algebras $A$ and $B$ are said to be {\em derived equivalent} if one of the following equivalent conditions holds

\smallskip
(1).  $\D{\rModcat{A}}$ and $\D{\rModcat{B}}$ are equivalent as triangulated categories.

(2). $\Df{\rModcat{A}}$ and $\Df{\rModcat{B}}$ are equivalent as triangulated categories.

(3). $\Db{\rModcat{A}}$ and $\Db{\rModcat{B}}$ are equivalent as triangulated categories.

(4). $\Kb{\rpModcat{A}}$ and $\Kb{\rpModcat{B}}$ are equivalent as triangulated categories.

(5). $\Kb{\rpmodcat{A}}$ and $\Kb{\rpmodcat{B}}$ are equivalent as triangulated categories.

(6). There is a complex $\cpx{T}$ in $\Kb{\rpmodcat{A}}$
satisfying the conditions:

  \quad \quad (a). $\Hom_{\Kb{\rpmodcat{A}}}(\cpx{T},\cpx{T}[n])= 0$ for all $n\neq 0$,

   \quad \quad (b). $\add(\cpx{T})$ generates $\Kb{\rpmodcat{A}}$ as a triangulated category,

\quad\quad  such that the endomorphism algebra of $\cpx{T}$ in $\Kb{\rpmodcat{A}}$ is isomorphic to $B$.

\medskip
{\parindent-0pt For the } proof that the above conditions are indeed equivalent, we refer to \cite{Rickard1989a,Keller1994}.
If the algebras $A$ and $B$ are left coherent rings, then the above equivalent conditions are further equivalent to the following condition.

\medskip
(7). $\Db{\rmodcat{A}}$ and $\Db{\rmodcat{B}}$ are equivalent as triangulated categories.

\medskip
{\parindent=0pt Note that} in this case the category $\rmodcat{A}$ of finitely presented $A$-modules is an abelian category.  A complex $\cpx{T}$ satisfying the conditions (6.a) and (6.b) above is called a {\em tilting complex}.

A triangle equivalence functor $F: \D{\rModcat{A}}\ra \D{\rModcat{B}}$ is called a {\em derived equivalence}. It is well-known that the image $F(A)$ is isomorphic in $\Db{\rModcat{B}}$ to a tilting complex, and there is a tilting complex $\cpx{T}$ over $A$ such that $F(\cpx{T})$ is isomorphic to $B$ in $\D{\rModcat{B}}$.  The complex $\cpx{T}$ is called an {\em associated tilting complex } of $F$.  The following is an easy lemma for the associated tilting complexes. Its proof can be found, for example, in \cite[Lemma 2.1]{Hu2010}

\begin{Lem}  Consider a derived equivalence
$F: \D{\rModcat{A}}\lra \D{\rModcat{B}}$ of two $k$-algebras $A$ and $B$. Suppose that $F(A)$ is isomorphic in $\D{\rModcat{B}}$ to a complex $\cpx{\bar{T}}\in\Kb{\rpmodcat{B}}$  of the form
$$0\lra \bar{T}^0\lra \bar{T}^1\lra\cdots\lra\bar{T}^n\lra 0$$
for some $n\geq 0$. Then $F^{-1}(B)$ is isomorphic  in $\D{\rModcat{A}}$ to a complex $\cpx{T}\in\Kb{\rpmodcat{A}}$ of the form
$$0\lra T^{-n}\lra\cdots\lra T^{-1}\lra T^0\lra 0.$$
\label{lemma-tiltCompForm}
\end{Lem}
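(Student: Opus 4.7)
The plan is to produce the required projective representative of $F^{-1}(B)$ in two stages: first determine the cohomology of $F^{-1}(B)$ by transporting it along $F$, and then reshape a bounded projective model so that it actually lives in degrees $[-n,0]$.

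Take any representative $\cpx{T}$ of $F^{-1}(B)$ in $\Kb{\rpmodcat{A}}$, which exists since $F$ restricts to a triangle equivalence $\Kb{\rpmodcat{A}}\simeq\Kb{\rpmodcat{B}}$. Using $F(\cpx{T})\cong B$ and the standard identification $H^i(\cpx{T})\cong\Hom_{\D{\rModcat{A}}}(A,\cpx{T}[i])$, I will get
\begin{equation*}
H^i(\cpx{T})\cong\Hom_{\D{\rModcat{B}}}(F(A),B[i])\cong\Hom_{\K{\rModcat{B}}}(\cpx{\bar{T}},B[i]),
\end{equation*}
which is the $i$-th cohomology of $\HomP_B(\cpx{\bar{T}},B)$, a complex concentrated in degrees $[-n,0]$. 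Hence $H^i(\cpx{T})=0$ for $i\notin[-n,0]$.

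Next I reshape $\cpx{T}$. To strip off positive degrees, pick the top nonzero term $T^m$ with $m>0$: the vanishing $H^m(\cpx{T})=0$ forces $d^{m-1}$ to be surjective onto the projective $T^m$, so the sequence splits as $T^{m-1}\cong T^m\oplus K$ with $K$ finitely generated projective, and replacing $T^{m-1}$ by $K$ while dropping $T^m$ yields a homotopy equivalent complex with one fewer positive term. Iterating, I may assume $\cpx{T}$ is concentrated in degrees $[\ell,0]$ with $\ell\leq -n$. To chop off degrees below $-n$, set $U:=\HomP_A(\cpx{T},A)$, a bounded complex of finitely generated projective left $A$-modules concentrated in degrees $[0,-\ell]$. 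The symmetric computation
\begin{equation*}
H^i(U)\cong\Hom_{\D{\rModcat{A}}}(\cpx{T},A[i])\cong\Hom_{\D{\rModcat{B}}}(B,\cpx{\bar{T}}[i])\cong H^i(\cpx{\bar{T}})
\end{equation*}
shows that $U$ has cohomology supported in $[0,n]$. Applying the top-pruning above to $U$ moves it into degrees $[0,n]$, and dualizing back through $\HomP_{A\opp}(U,A)\cong\cpx{T}$ yields the required representative of $F^{-1}(B)$ in degrees $[-n,0]$.

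The main obstacle is precisely the asymmetry in this pruning: surjections onto projectives split automatically, making the top cut easy, but injections out of projectives need not split, which blocks a direct bottom cut. Routing through the duality $(-)^{\vee}=\HomP_A(-,A)$ converts the delicate bottom cut over $A$ into the easy top cut over $A\opp$, and the two-sided cohomology calculation in Stage 1 is exactly what is needed to feed this duality step.
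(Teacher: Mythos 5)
Your proof is correct, and it is essentially the standard argument: the paper itself does not write out a proof of this lemma but refers to \cite[Lemma 2.1]{Hu2010}, where it is established in just this way. Namely, one transports $H^i(\cpx{T})\cong\Hom_{\D{\rModcat{B}}}(\cpx{\bar{T}},B[i])$ and $H^i(\HomP_A(\cpx{T},A))\cong\Hom_{\D{\rModcat{B}}}(B,\cpx{\bar{T}}[i])\cong H^i(\cpx{\bar{T}})$ through the equivalence to bound the cohomology of $\cpx{T}$ and of its $A$-dual, and then splits off contractible top pieces (surjections onto projectives split) on each side of the duality $\HomP_A(-,A)$ --- exactly your two pruning steps.
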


\section{Differential graded algebras and their derived categories}\label{dg modules}

In this section, we collect some background on differential graded algebras, their derived categories of differential graded modules and differential graded bimodules, which will be used all throughout this paper.
Interpreting differential graded algebras as differential graded
categories with just one object, the material is a particular case of the development in \cite{Keller1994,Keller2006,Nicolas2014b,Chen2015a}.

\subsection{Differential graded algebras and differential graded modules}
Recall that a differential graded
$k$-algebra (=dg algebra) is a $\mathbb{Z}$-graded associative
$k$-algebra $A=\displaystyle\oplus_{i\in\mathbb{Z}} A^i$ with a
differential $d:A^i\ra A^{i+1}$ satisfying the graded Leibniz rule
$$d(ab)=d(a)b+(-1)^p ad(b),\forall a\in A^p, b\in A$$. A dg right $A$-module is a
$\mathbb{Z}$-graded right $A$-module
$M=\displaystyle\oplus_{i\in\mathbb{Z}}M^i $ with a graded $k$-linear
differential $d: M^i\ra M^{i+1}$ such that
$$
d(ma)=d(m)a+(-1)^nmd(a), \forall m\in M^n, a\in A.
$$
It is useful to look at each dg $A$-module as a complex
$$\cdots\ra M^{i-1}\stackrel{d^{i-1}}\ra
M^i\stackrel{d^i}\ra M^{i+1}\ra\cdots$$ of $k$-modules with some extra properties.

A morphism between right dg $A$-modules $f: M\ra N$ is a morphism of the
underlying graded $A$-modules which is homogeneous of degree zero
and commutes with the differential.
Denote by $\rGr{A}$ the category of graded right $A$-modules with graded $A$-module homomorphisms.
Let $M$ and $N$ be right dg $A$-modules. Set $\HOM_A(M,N)^n=\Hom_{\rGr{A}}(M,N<n>)$ for each
$n\in\mathbb{Z}$, it consists of $k$-linear maps $f: M\lra N$ which are homogeneous of degree $n$ and satisfy
$f(ma) =f(m)a$ for all homogeneous elements $a\in A$. The graded vector space
$$\HOM_A(M,N)=\oplus_{i\in\mathbb{Z}}\HOM_A(M,N)^i=\oplus_{i\in\mathbb{Z}}\Hom_{\rGr{A}}(M,N<i>)$$
has a natural differential $d$ such that $d(f)=d_N\circ f+ (-1)^{|f|+1} f\circ d_M.$
Furthermore,
$\HOM_A(M,M)$ becomes a dg algebra with this differential and the usual composition as multiplication.
Denote by $\mathcal {C}_{dg}{A}$ the category of differential graded right $A$-modules with morphism space $\mathcal {C}_{dg}{A}(M,N)=\HOM_A(M,N)$.

\subsection{Derived category of dg module categories}
Let $A$ be a dg $k$-algebra. Let $\mathscr{C}(A)$ be the category of dg $A$-modules.
 A morphism $f: X\lra Y$ in $\mathscr{C}(A)$ is a morphism in $\rGr{A}$ which is a chain map of complexes of
k-modules. We have
$$\Hom_{\mathscr{C}(A)}(X, Y)=\mathcal {Z}^0(\HOM_A(X,Y))=\{f\in\HOM_A(M,N)^0| d(f)=0\}.$$
Note that $\mathscr{C}(A)$ is an abelian category and comes with
a canonical shifting $[1]:\mathscr{C}(A)\lra \mathscr{C}(A)$
which comes from the canonical shifting of $\rGr{A}$, by defining $d^n_{M[1]}=d_M^{n+1}$
 for each $n\in\mathbb{Z}$. Then
we have an obvious faithful forgetful functor
$$F: \mathscr{C}(A)\lra \rGr{A}$$
which is also dense on objects since we can interpret
each graded $A$-module as an object of $\mathscr{C}(A)$ with zero differential. Viewing the objects
of $\mathscr{C}(A)$ as complexes of $k$-modules, we clearly have, for each $p\in\mathbb{Z}$, the $p$-th homology
functor
$$H^p : \mathscr{C}(A)\lra \rModcat{k}.$$
A morphism $f : X\lra Y$ in $\mathscr{C}(A)$ is called a quasi-isomorphism if $H^p(f)$ is an isomorphism, for all $p\in\mathbb{Z}$. A dg A-module $X$ is called
acyclic if $H^p(X)=0$, for all $p\in\mathbb{Z}$.

For any dg $A$-module $X=\oplus X^i$, we define a functor $F_{\rho}:\rGr{A}\lra \mathscr{C}(A)$ given by
 $$F_{\rho}(X)=X\oplus X<1>$$
 as graded $A$-modules.
For a map $f\in\Hom_{\rGr{A}}(X,Y)$, $F_{\rho}(f):F_{\rho}(X)\lra F_{\rho}(Y)$ given by
$$F_{\rho}(\left[\begin{smallmatrix}x\\y \end{smallmatrix}\right])=\left[\begin{smallmatrix}f(x)\\f(y) \end{smallmatrix}\right].$$
Then we get a dg $A$-module
$F_{\rho}(X)=X\oplus X<1>$ given by
$$\left[\begin{smallmatrix}x\\y \end{smallmatrix}\right]a=\left[\begin{smallmatrix}xa\\ya+(-1)^{|x|}xd(a) \end{smallmatrix}\right]$$
with the differential
$d=\left[\begin{smallmatrix}0&1\\0&0 \end{smallmatrix}\right]$. It is well-known that $(F,F_{\rho})$ is an adjoint pair.
Note that $F_{\rho}F(X)$ is a projective-injective object in $\mathscr{C}(A)$, and we have a canonical exact sequence
$0\lra X\lra F_{\rho}F(X)\lra X[1]\lra 0$
which splits in $\rGr{A}$ but not in $\mathscr{C}(A)$.
The exact structure on $\mathscr{C}(A)$ is the exact sequence
$$0\lra X\lra Y\lra Z\lra 0$$
in $\mathscr{C}(A)$
which is split in $\rGr{A}$. With this exact structure, $\mathscr{C}(A)$ is a Frobenius exact category.
Then the stable category $\mathscr{C}(A)$ with respect to the given exact structure is denoted by $\K{A}$, and is called the homotopy category of $A$.
It is a triangulated category, and each triangle in $\K{A}$ is isomorphic to
$$X\lraf{f}Y\lraf{\left[\begin{smallmatrix}-1\\0 \end{smallmatrix}\right]}X[1]\oplus Y\lraf{\left[\begin{smallmatrix}0 &1 \end{smallmatrix}\right]}X[1]$$
for some $f:X\lra Y$. Let $f:M\ra N$ be a morphism of dg $A$-modules. We say that $f$ is a
null-homotopic if we get
$$f=dr+rd,$$
where $r:M\ra N$ is a morphism
of the underlying graded $A$-modules which is homogeneous of degree
$-1$. Denote by $\Sigma$ the class of all homotopy class of
quasi-isomorphisms. Then $\Sigma$ is a multiplicative system in $\K{A}$ compatible with the triangulation and the localization
$$\D{A}:=\K{A}[\Sigma^{-1}]$$
is called the derived category of $A$. It is also a triangulated category with translation functor $[1]$ induced from $\mathscr{C}(A)$.

For two dg $A$-modules $X$ and $Y$, there is an isomorphism
$$
\Hom_{\K{A}}(X,Y[n])\simeq H^n(\HOM_A(X,Y))
$$
for each $n\in\mathbb{Z}$. We observe an isomorphism $\HOM_A(A,X)\simeq X$ of complexes sending $f$ to $f(1)$.  Then the above isomorphism induces the following isomorphism
$$
\Hom_{\K{A}}(A,X[n])\simeq H^n(X).
$$

It is well-known that $\D{A}$ can be realized as the Verdier quotient category of $\K{A}$ by its full subcategory of acyclic dg $A$-modules.
A dg $A$-module $P$ is called homotopically projective if $\HOM_A(P,M)=0$, for every
acyclic dg $A$-module $M$, which is equivalent to the condition that the canonical functor $\K{A}\lra\D{A}$ induces an isomorphism $$\Hom_{\K{A}}(P,X)\simeq\Hom_{\D{A}}(P,X)$$
for any dg $A$-module $X$.

For a triangulated category $\mathcal {T}$ and a class $\mathcal {S}$ of objects, we denote by $\thick_{\mathcal {T}}{\mathcal {S}}$ the smallest
thick subcategory of $\mathcal {T}$ containing $\mathcal {S}$. Denote by $\Tria_{\mathcal {T}}{\mathcal {S}}$ is the smallest subcategory of
$\mathcal {T}$ containing $\mathcal {S}$ and closed under coproducts. Recall that a thick subcategory is a triangulated subcategory which is closed under direct
summands. We say an object $M$ in $\mathcal {T}$ is compact if the functor $\Hom_{\mathcal {T}}(M,-)$ commutes with arbitrary coproducts.

The following easy lemma is useful in the later, for the proof we refer to \cite[Corollary 3.19]{Nicolas2012a}.

\begin{Lem}\label{4.2}  Let $F: \mathcal {T} \lra \mathcal {T}'$ be a triangle functor of triangulated categories and
$T\in\mathcal {T}$ an object. Then
$$F(\thick_{\mathcal {T}}(T))\subseteq\thick_{\mathcal {T}'}(F(T)).$$ \label{thick}
\end{Lem}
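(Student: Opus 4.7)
The plan is a standard closed-subcategory argument on the source side. Define
$$\mathcal{S} := \{X \in \mathcal{T} : F(X) \in \thick_{\mathcal{T}'}(F(T))\},$$
and show that $\mathcal{S}$ is a thick subcategory of $\mathcal{T}$ that contains $T$. Since $\thick_{\mathcal{T}}(T)$ is by definition the smallest such subcategory, this yields $\thick_{\mathcal{T}}(T) \subseteq \mathcal{S}$, which is precisely the claimed inclusion $F(\thick_{\mathcal{T}}(T)) \subseteq \thick_{\mathcal{T}'}(F(T))$.

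The verification that $\mathcal{S}$ is thick breaks into the usual checks. Membership of $T$ is immediate, since $F(T)\in\thick_{\mathcal{T}'}(F(T))$ tautologically. Closure under the shift uses that a triangle functor commutes with $[1]$ up to natural isomorphism, so $F(X[\pm 1]) \cong F(X)[\pm 1]$ lies in $\thick_{\mathcal{T}'}(F(T))$ whenever $F(X)$ does. Closure under cones uses the defining property of a triangle functor: a distinguished triangle $X \to Y \to Z \to X[1]$ in $\mathcal{T}$ is sent to a distinguished triangle $F(X) \to F(Y) \to F(Z) \to F(X)[1]$ in $\mathcal{T}'$, and since $\thick_{\mathcal{T}'}(F(T))$ is triangulated, any two of $F(X), F(Y), F(Z)$ lying in it forces the third to lie there as well. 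Finally, closure under direct summands follows from additivity of $F$: if $X \cong X_1 \oplus X_2$ belongs to $\mathcal{S}$, then $F(X) \cong F(X_1)\oplus F(X_2)$ lies in the thick subcategory $\thick_{\mathcal{T}'}(F(T))$, which by thickness contains each summand, so $X_1, X_2 \in \mathcal{S}$.

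There is no genuine obstacle; the only point requiring any care is that $\mathcal{S}$ must be defined in terms of the thick closure on the target side, so that the direct-summand step goes through formally. Once this is set up, everything reduces to the axioms for a triangle functor together with the definition of a thick subcategory, and the conclusion follows by invoking the minimality of $\thick_{\mathcal{T}}(T)$.
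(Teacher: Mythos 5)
Your argument is correct and complete: the closed-subcategory device $\mathcal{S}=\{X\in\mathcal{T}: F(X)\in\thick_{\mathcal{T}'}(F(T))\}$ together with the minimality of $\thick_{\mathcal{T}}(T)$ is exactly the standard proof of this fact. The paper itself gives no proof, deferring to \cite[Corollary 3.19]{Nicolas2012a}, and the argument there is essentially the one you wrote, so there is nothing further to reconcile.
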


The following is a very well-known fact \cite[theorem 5.3]{Keller1994}.

\begin{Prop} Let $A$ be a dg algebra. The compact objects of $\D{A}$ are precisely
the objects of $\thick_{\D{A}}(A):=\per{A}$.
\end{Prop}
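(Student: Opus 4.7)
The plan is to prove the two inclusions $\thick_{\D{A}}(A)\subseteq \D{A}^c$ and $\D{A}^c\subseteq \thick_{\D{A}}(A)$, where $\D{A}^c$ denotes the full subcategory of compact objects. The forward inclusion is the easy half: I would first observe that $A$ itself is compact. For this, I would use the isomorphism $\Hom_{\K{A}}(A,X[n])\simeq H^n(X)$ recalled above, combined with the fact that the canonical functor $\K{A}\to\D{A}$ is an isomorphism on morphisms out of the homotopically projective object $A$. Since the functor $H^n$ on $\D{A}$ commutes with arbitrary coproducts (coproducts of dg modules are given by coproducts of the underlying complexes of $k$-modules and homology commutes with these), it follows that $\Hom_{\D{A}}(A,-)$ commutes with coproducts, so $A$ is compact. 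Then a direct check shows that $\D{A}^c$ is a thick subcategory of $\D{A}$ (closed under shifts, extensions in triangles, and direct summands), which yields $\thick_{\D{A}}(A)\subseteq \D{A}^c$ immediately.

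For the reverse inclusion, the strategy is to invoke the standard Neeman-style theorem for compactly generated triangulated categories: if $\mathcal{T}$ is compactly generated by a single compact object $G$, then $\mathcal{T}^c=\thick_{\mathcal{T}}(G)$. To apply this, I need to verify that $A$ generates $\D{A}$ in the sense that $\D{A}=\Tria_{\D{A}}(A)$, or equivalently that an object $X\in\D{A}$ with $\Hom_{\D{A}}(A,X[n])=0$ for all $n\in\mathbb{Z}$ must vanish. But this vanishing condition is equivalent, by the isomorphism $\Hom_{\D{A}}(A,X[n])\simeq H^n(X)$, to the acyclicity of $X$; and an acyclic dg module is zero in $\D{A}$ by definition.

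With generation in hand, the core step is to show that any compact $M\in\D{A}$ lies in $\thick_{\D{A}}(A)$. Following Keller, one writes $M$ as a homotopy colimit of a sequence $M_0\to M_1\to M_2\to\cdots$ with $M_0\in\add(A[\mathbb{Z}])$ and each cone of $M_i\to M_{i+1}$ in $\add(A[\mathbb{Z}])$, so each $M_i$ lies in $\thick_{\D{A}}(A)$; this is possible because $A$ generates $\D{A}$. Compactness of $M$ then forces the identity $\id_M\colon M\to\hocolim M_i$ to factor through some $M_i$, exhibiting $M$ as a direct summand of $M_i$ and hence as an object of $\thick_{\D{A}}(A)$.

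The main obstacle is this last step: turning ``generation as a localizing subcategory'' into ``generation as a thick subcategory after compactness is assumed''. This is precisely Neeman's compact generation argument and relies on building the telescope/Milnor triangle and using that a compact object cannot see infinite coproducts, so factorization through a finite stage is automatic. Everything else (compactness of $A$, thickness of $\D{A}^c$, the vanishing criterion for generation) is a straightforward application of the isomorphism $\Hom_{\D{A}}(A,X[n])\simeq H^n(X)$ and the fact that coproducts in $\D{A}$ are computed componentwise.
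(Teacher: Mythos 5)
The paper does not actually prove this proposition; it is quoted as a ``very well-known fact'' with a pointer to Keller's Theorem~5.3 in \cite{Keller1994}, and your outline is in substance exactly the Keller--Neeman argument that this citation stands for, so there is no real divergence of method. The easy half is fine: $A$ is compact because $\Hom_{\D{A}}(A,X[n])\simeq H^n(X)$ and homology commutes with the componentwise coproducts of $\D{A}$, the compact objects form a thick subcategory by the five lemma and retract arguments, and your verification that $A$ generates (via acyclicity) is correct. The one place where your sketch is not right as written is the tower in the hard direction: it is not true that an arbitrary compact object of $\D{A}$ can be presented, just ``because $A$ generates'', as an $\mathbb{N}$-indexed homotopy colimit of a sequence whose stages have cones in $\add(A[\mathbb{Z}])$ --- that finiteness is essentially what you are trying to prove. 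Generation only yields a semifree/cellular tower whose successive cones lie in $\Add(A[\mathbb{Z}])$, i.e.\ possibly infinite coproducts of shifts of $A$. Compactness of $M$ must then be used twice: once so that $\id_M$ factors through a finite stage $P_n$ of that tower, exhibiting $M$ as a summand of an $n$-fold extension of objects of $\Add(A[\mathbb{Z}])$, and a second time, via Neeman's lemma that a map from a compact object into a coproduct factors through a finite subcoproduct, to descend by induction on $n$ from $\Add(A[\mathbb{Z}])$ to $\add(A[\mathbb{Z}])$ and land in $\thick_{\D{A}}(A)$. You allude to this second use (``a compact object cannot see infinite coproducts''), so the idea is present, but as stated the $\add$-tower begs the question; with that correction your proof is the standard one.
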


The triangulated subcategory $\per{A}$ is called the perfect derived category of $A$.

\subsection{Differential graded bimodules}
We give in this subsection some useful lemmas for later use.

Let $A$ and $B$ be two dg algebras. A dg $A$-$B$-bimodule $X$ is a left dg $A$-module as well as right dg $B$-module such that
$$d(amb)=(da)mb+(-1)^p a(dm)b+(-1)^{p+q}am(db),$$ for all $a\in A^p, m\in M$ and $b\in B^q$.
And the canonical map $A\lra\HOM_B(X, X),$
sending $a$ to $l_a$ with $l_a(x)=ax$ is a homomorphism of dg algebras.
Similarly, the canonical map $B\lra\HOM_{A^{op}}(X, X)^{op},$ sending $b$ to $r_b$ with $r_b(x)=(-1)^{|b||x|} xb$ is a homomorphism of dg algebras.

Recall that a dg $A$-$B$-bimodule $X$ is called left quasi-balanced if the canonical map $A\lra\HOM_B(X, X)$ of dg algebras is a quasi-isomorphism.
Dually, $X$ is called right quasi-balanced if the canonical map $B\lra\HOM_{A^{op}}(X, X)$ of dg algebras is a quasi-isomorphism. If both the canonical map $A\lra\HOM_B(X, X)$ and the canonical map $B\lra\HOM_{A^{op}}(X, X)$ of dg algebras are  quasi-isomorphisms, then $X$ is called a quasi-balanced bimodule.
Let $X$ be a dg $A$-$B$-bimodule. Then $\HOM_B(_AX_B, Y_B)$ becomes a right dg $A$-module by $(fa)(x)=f(ax)$ for $f\in\HOM_B(_AX_B, Y_B)$ and $x\in X$. Similarly, $\HOM(_AX_B, _AY)$ is a left dg $B$-module by
$(bf)(x)=(-)^{|x|(|f|+|b|)}f(xb)$ for $f\in\HOM_B(_AX_B, _AY)$ and $x\in X$.

Here we give a condition for a fully faithful exact functor between triangulated
categories to be an equivalence. We also refer the reader to \cite[Theorem 3.3]{Bridgeland1999}.

\begin{Lem}\label{ffd} Let $\mathcal {T}$ and $\mathcal {S}$ be triangulated categories.
 Let $F:\mathcal {T} \lra \mathcal {S}$ and $G:\mathcal {S} \lra \mathcal {T}$
be an adjoint pair of triangle functors. Then we have the following

$(1)$ If the unit $\eta: Id_{\mathcal {T}}\lra GF$ is an isomorphism and $GX\simeq 0\Leftrightarrow X\simeq 0$, then $F$ and $G$ induce mutually inverse equivalences between $\mathcal {T}$ and $\mathcal {S}$.

$(2)$ If the counit $\varepsilon: FG\lra Id_{\mathcal {S}}$ is an isomorphism and $FY\simeq 0\Leftrightarrow Y\simeq 0$, then $F$ and $G$ induce mutually inverse equivalences between $\mathcal {T}$ and $\mathcal {S}$.

\end{Lem}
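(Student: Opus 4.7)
The statement is a standard criterion for when an adjoint pair of triangle functors is an equivalence; the two parts are dual, so I would prove $(1)$ in detail and obtain $(2)$ by a formal dualization.

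For $(1)$, my strategy is to show that the hypothesis forces the counit $\varepsilon : FG \to \mathrm{Id}_{\mathcal{S}}$ to be an isomorphism as well, which together with $\eta$ being iso gives mutually inverse equivalences by a general formal argument. First, since $\eta : \mathrm{Id}_{\mathcal{T}} \to GF$ is a natural isomorphism, $F$ is fully faithful (standard nonsense for adjunctions: $F$-fully-faithfulness is equivalent to $\eta$ being iso). So the only thing to check is essential surjectivity of $F$, i.e. that $\varepsilon_Y$ is an isomorphism for every $Y \in \mathcal{S}$.

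Fix $Y \in \mathcal{S}$ and complete $\varepsilon_Y$ to a triangle in $\mathcal{S}$:
$$FGY \lraf{\varepsilon_Y} Y \lra C_Y \lra FGY[1].$$
Since $G$ is a triangle functor, applying $G$ gives a triangle
$$GFGY \lraf{G\varepsilon_Y} GY \lra GC_Y \lra GFGY[1]$$
in $\mathcal{T}$. The triangular identity for the adjunction $(F,G)$ reads $G\varepsilon_Y \circ \eta_{GY} = \mathrm{id}_{GY}$, and $\eta_{GY}$ is an isomorphism by assumption, so $G\varepsilon_Y$ is an isomorphism. Hence $GC_Y \simeq 0$, and by the hypothesis $GX \simeq 0 \Leftrightarrow X \simeq 0$ we conclude $C_Y \simeq 0$, so $\varepsilon_Y$ is an isomorphism. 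Thus $\varepsilon$ is a natural isomorphism, and combined with $\eta$ being iso this gives $F$ and $G$ as mutually inverse triangle equivalences.

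For $(2)$, I would reverse the roles by passing to opposite categories, or equivalently run the same argument with the other triangular identity $\varepsilon_{FX} \circ F\eta_X = \mathrm{id}_{FX}$: form the triangle on $\eta_X : X \to GFX$ in $\mathcal{T}$, apply $F$, use that $\varepsilon_{FX}$ is iso to deduce $F\eta_X$ is iso, conclude the cone has vanishing $F$-image, and invoke the hypothesis $FY \simeq 0 \Leftrightarrow Y \simeq 0$. I do not expect any real obstacle; the only subtlety is being careful that the hypothesis in each part is used in precisely the right place, namely to pass from $G$ (resp.\ $F$) killing a cone to the cone itself being zero, since without such a faithfulness-on-zero-objects condition the triangle identity alone only gives that one of the unit/counit is a split mono or split epi, not an isomorphism.
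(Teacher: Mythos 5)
Your proof is correct and follows essentially the same route as the paper: complete $\varepsilon_Y$ to a triangle, apply $G$, use the triangular identity $G\varepsilon_Y\circ\eta_{GY}=\mathrm{id}_{GY}$ together with the invertibility of $\eta_{GY}$ to see that the cone is killed by $G$, and then invoke the hypothesis to conclude the cone vanishes. The paper likewise dispatches part $(2)$ by duality, so there is nothing to add.
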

\begin{proof}
(1) For any $Y\in \mathcal {S}$, there is a triangle $$ FGY\lraf{\varepsilon_{Y}} Y\lra Z\lra (FGY)[1]$$
Applying the functor $G$, then we get the following triangle
$$\xymatrix{
  GFGY \ar[r]^{G(\varepsilon_{Y})} &   GY\ar[r] & GZ \ar[r] & (GFGY)[1].    \\
  GY \ar[u]^{\eta_{GY}}\ar@{=}[ur].                     }
$$
Therefore, we get $1_{GX}=G(\varepsilon_{Y}) \eta_{GY}$.
Since $\eta_{GY}: GY\lra GFGY$ is an isomorphism, $GFGY\lraf{G(\varepsilon_{Y})} GY$ is an isomorphism. Then
$GZ\simeq 0$. By assumptation, $Z\simeq 0$. Consequently,  $FGY\simeq Y$.

 $(2)$ The proof of $(2)$ is similar to that of $(1)$.
\end{proof}

The following lemma is the Morita theory for derived categories of
dg sense which was proved by Keller \cite{Keller1994}. We give an another characterization by Lemma \ref{ffd}.
\begin{Lem}\label{Morita-lemma}
Let $_AM_B$ be a dg $A$-$B$-bimodule such that $M_B$ is a homotopically projective compact generator in $\D{B}$ and
the canonical map
$$A\lra \HOM_B(M,M), \quad (a\mapsto (m\mapsto am))$$
is a quasi-isomorphism of dg algebras.
Then
$-\otimesL_AM$ and $\rHom_B(M,-)$ induces mutually inverse equivalences between $\D{A}$ and $\D{B}$.
\end{Lem}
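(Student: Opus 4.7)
The plan is to apply Lemma \ref{ffd}(1) to the adjoint pair
$$F:=-\otimesL_A M\colon \D{A}\lra \D{B},\qquad G:=\rHom_B(M,-)\colon \D{B}\lra \D{A},$$
both of which are triangle functors. Since $M_B$ is homotopically projective, $G$ may be computed as the underived $\HOM_B(M,-)$, and since $M_B$ is compact in $\D{B}$, the functor $G$ commutes with arbitrary coproducts. The functor $F$ commutes with coproducts for general reasons. So the task reduces to verifying the two hypotheses of Lemma \ref{ffd}(1): that the unit $\eta\colon \mathrm{Id}_{\D{A}}\lra GF$ is an isomorphism, and that $G$ is conservative.

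For the unit, the first step is to compute $\eta$ on the free rank-one module $A\in\D{A}$. One has $A\otimesL_A M\simeq M$ in $\D{B}$, and because $M$ is homotopically projective, the canonical map
$$\HOM_B(M,M)\lra \rHom_B(M,M)=GF(A)$$
is an isomorphism in $\D{A}$. Under these identifications $\eta_A$ becomes the canonical map $A\lra \HOM_B(M,M)$, which is a quasi-isomorphism by hypothesis. Hence $\eta_A$ is an isomorphism in $\D{A}$.

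The second step is to upgrade this to every $X\in\D{A}$. Since $F$ and $G$ are triangle functors commuting with coproducts, the full subcategory
$$\{X\in\D{A}\mid \eta_X\text{ is an isomorphism}\}$$
is a triangulated subcategory closed under coproducts and direct summands. It contains $A$, hence contains $\Tria_{\D{A}}(A)=\D{A}$; so $\eta$ is an isomorphism of functors.

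For conservativity of $G$, suppose $GY\simeq 0$ in $\D{A}$. Using $\Hom_{\K{A}}(A,-)\simeq H^0$ and the fact that $M$ is homotopically projective, one obtains for every $n\in\mathbb{Z}$
$$\Hom_{\D{B}}(M,Y[n])\simeq H^n\!\bigl(\HOM_B(M,Y)\bigr)\simeq \Hom_{\D{A}}(A,GY[n])=0.$$
As $M$ is a generator of $\D{B}$, this forces $Y\simeq 0$. Lemma \ref{ffd}(1) then yields that $F$ and $G$ are mutually inverse equivalences. The only delicate point is to make sure the various identifications ($A\otimesL_A M\simeq M$, $\rHom_B(M,-)\simeq \HOM_B(M,-)$, and the identification of $\eta_A$ with the given canonical map) are intertwined correctly; the rest is formal manipulation of adjunctions and triangulated generation.
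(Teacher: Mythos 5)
Your proposal is correct and follows essentially the same route as the paper: verify that $\eta_A$ is the given quasi-isomorphism $A\to\HOM_B(M,M)$, propagate to all of $\D{A}$ via the triangulated subcategory closed under coproducts containing $A$ (using $\Tria_{\D{A}}(A)=\D{A}$), establish conservativity of $\rHom_B(M,-)$ from the compact generator hypothesis, and invoke Lemma \ref{ffd}(1). The only cosmetic difference is that you justify closure under coproducts directly from compactness of $M$, whereas the paper routes this through an explicit diagram of homotopy and derived categories; the substance is the same.
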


\begin{proof} Let $F=-\otimes_AM$ and $G= \HOM_B(M,-)$. Then we consider the left and right derived functor $\bar{F}:=-\otimesL_AM$ and $\bar{G}:=\rHom_B(M,-)$, respectively.
We know that $G$ preserves a quasi-isomorphism. Then we have the following
$$A_A\lraf{\eta_A}GFA=\HOM_B(_AM_B,A\otimes_AM)\lra \HOM_B(M,M), \quad (a\mapsto (m\mapsto a\otimes m\mapsto am))$$
Hence $\eta_A$ is a quasi-isomorphism. Since $M_B$ is a homotopically projective, the unit of $A\lra \bar{G}\bar{F}A$ is an isomorphism. Let
$$\mathscr{X}=\{X\in\D{A}|\;\eta_X: X\lra \bar{G}\bar{F}X\; \mbox{is an isomorphism}\}.$$ Then $\mathscr{X}$ is a subcategory of $\D{A}$ containing $A_A$.
Let $\K{A}_{ac}=\{M\in\K{A}|\; M \;\mbox{is acyclic as $k$-complex} \}$ and $\K{A}_{ac}^{\perp}=\{X\in\K{A}|\; \Hom_{\K{A}}(\K{A}_{ac},X)=0 \}$. Then we have the following triangle functors
$$\Tria(A)\hraf{\lambda}\K{A}\lraf{q}\D{A}, \K{A}_{ac}^{\perp}\hraf{\mu}\K{A}\lraf{q}\D{A}.$$ Here $q\lambda$ and $q\mu$ are equivalent of triangle functors. Let $\gamma$ and $\gamma'$ be the quasi-inverse of $q\lambda$ and $q\mu$, respectively. Set $p:=\lambda\gamma$ and $i:=\mu\gamma'$. We thus have the following diagram
$$\xymatrix{
\K{A}\ar[dd]|q \ar@/^/[dd]^p\ar@/ ^/[rr]^F&& \K{B}\ar@/^/[ll]^G\ar[dd]|q \ar@/^/[dd]^p\\
&&\\\D{A} \ar@/^/[uu]^i\ar@/^/[rr]^{LF}&& \D{B}\ar@/^/[ll]^{LG}\ar@/^/[uu]^i.
}$$
Note that $\bar{F}=qFp$ and $\bar{F}=qGi$. The functors $p,F,q,G$ preserve coproducts.  It follows that $i(\coprod X)\simeq i(\coprod qi(X))\simeq i(q\coprod i(X))\simeq iq(\coprod i(X))\simeq\coprod i(X)$.
Therefore, $\mathscr{X}$ is closed under coproducts. Consequently, we get $\Tria_{\D{A}}(A)\subseteq\mathscr{X}\subseteq\D{A}$. Then $\mathscr{X}=\D{A}$.
This implies that $\eta_X: X\lra \bar{G}\bar{F}(X))$ is an isomorphism. For any $X\in\D{B}$ such that $\bar{G}(X)=0$. Thus $qGi(X)=0$. Therefore, $q\HOM_B(M_B,i(X))=0$.
Thus $\HOM_B(M_B,i(X))$ is acyclic. It follows that $\Hom_{\K{B}}(M_B,i(X)[n])=0$ for $n\in \mathbb{Z}$. So, $\Hom_{\D{B}}(M_B,X[n])=0$. Then $X\simeq0$ by $M_B$ is a compact generator in $\D{B}$. This completes the proof by Lemma \ref{ffd}.
\end{proof}

\begin{Koro}\cite{Keller1994}\label{quasi-iso} Suppose that $\phi: A\lra B$ is a dg algebra homomorphism. If $\phi$ is a quasi-isomorphism of dg algebras, then
$-\otimesL_AB$ and $\rHom_B(B,-)$ induces mutually inverse equivalences between $\D{A}$ and $\D{B}$.
\end{Koro}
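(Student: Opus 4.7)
The plan is to deduce this corollary directly from the preceding Lemma \ref{Morita-lemma} by taking the dg $A$-$B$-bimodule to be $M = B$, where the left $A$-action is the one induced by the dg algebra homomorphism $\phi\colon A\to B$ via $a\cdot b := \phi(a)b$. With this choice, the statement of the corollary becomes exactly the conclusion of Lemma \ref{Morita-lemma}, so it suffices to verify the two hypotheses of that lemma for $M=B$.

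First I would check that $B_B$ is a homotopically projective compact generator of $\D{B}$. The evaluation isomorphism $\HOM_B(B,N)\simeq N$, $f\mapsto f(1)$, of dg $k$-modules shows that $\HOM_B(B,N)$ is acyclic whenever $N$ is acyclic, so $B$ is homotopically projective by definition. Compactness and generation are standard: since $\Hom_{\D{B}}(B,X[n]) \simeq H^n(X)$ for every $X\in\D{B}$, the object $B$ detects isomorphisms in $\D{B}$, hence generates; and by Proposition 3.3 the compact objects of $\D{B}$ are precisely the objects of $\per{B} = \thick_{\D{B}}(B)$, which contains $B$.

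Next I would identify the canonical map $A \to \HOM_B(B,B)$, $a\mapsto (b\mapsto \phi(a)b)$. Composing with the quasi-isomorphism of dg algebras $\HOM_B(B,B)\to B$, $f\mapsto f(1)$, recovers exactly $\phi$. Hence the canonical map is quasi-isomorphic to $\phi$, which is a quasi-isomorphism by assumption.

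With both hypotheses of Lemma \ref{Morita-lemma} verified, the functors $-\otimesL_A B$ and $\rHom_B(B,-)$ are mutually inverse triangle equivalences between $\D{A}$ and $\D{B}$. I do not expect any essential obstacle: the only point requiring mild care is the identification of the canonical map $A\to \HOM_B(B,B)$ with $\phi$, which is a direct unwinding of definitions and of the sign conventions in the dg bimodule structure on $B$.
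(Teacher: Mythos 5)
Your proposal is correct and follows exactly the paper's route: the paper also deduces the corollary as the special case $M={}_AB_B$ of Lemma \ref{Morita-lemma}, with $B_B$ a homotopically projective compact generator of $\D{B}$ and the canonical map $A\to\HOM_B(B,B)$ identified with $\phi$. Your write-up merely spells out the verifications that the paper leaves implicit.
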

\begin{proof} This is the special case of Lemma \ref{Morita-lemma} for $M=_AB_B$, $_AB_B$ is a dg $A$-$B$-bimodule such that $B_B$ is a homotopically projective compact generator in $\D{B}$.
\end{proof}

\section{Derived equivalences of dg-algebras induced from triangles}\label{triangles induced derived euivalences}
Throughout this section, we fix $A$ a dg algebra and denote its homotopy category
by $\K{A}$. The goal of this section is to get derived equivalences of two dg algebras which are obtained from some triangles in $\K{A}$.
 As an application, we give an affirmative answer to Dugas' question \cite{Dugas2015}. First we need the following well-known observation.

\begin{Lem}\cite[Lemma 8.4]{Iyama2013b}\label{Ia} Let $F: \mathcal {T} \lra \mathcal {T}'$ be a triangle functor of triangulated categories and
$T$  an object in $\mathcal {T}$. If
 $$F_{T,T}: \Hom_{\mathcal {T}}(T,T[n]) \lra \Hom_{\mathcal {T}'}(F(T), F(T)[n])$$
  is an isomorphism for any $n\in\mathbb{Z}$, then
  $$F : \thick_{\mathcal {T}}(T)\lra \mathcal {T}'$$
  is fully faithful. \label{fully-faithful}
\end{Lem}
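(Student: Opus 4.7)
The plan is to prove the lemma by a standard two-stage dévissage argument: first extend the isomorphism property in the source variable from $T$ to all of $\thick_{\mathcal{T}}(T)$, and then repeat the argument in the target variable. Throughout, the crucial tool is the five-lemma applied to the long exact sequences obtained by applying $\Hom$ to a distinguished triangle together with its $F$-image.

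For the first stage, define
$$\mathcal{X} = \bigl\{X \in \thick_{\mathcal{T}}(T) \,\big|\, F_{X,T[n]}: \Hom_{\mathcal{T}}(X, T[n]) \lra \Hom_{\mathcal{T}'}(F(X), F(T)[n]) \text{ is an iso for all } n \in \mathbb{Z}\bigr\}.$$
By hypothesis $T \in \mathcal{X}$, and closure under shifts is immediate from the "for all $n$" quantifier. Given a distinguished triangle $X_1 \to X_2 \to X_3 \to X_1[1]$ with two of the $X_i$ in $\mathcal{X}$, apply the cohomological functors $\Hom_{\mathcal{T}}(-, T[n])$ and $\Hom_{\mathcal{T}'}(-, F(T)[n])$ to it and to its $F$-image; the natural transformation $F$ yields a commutative ladder of long exact sequences, and the five-lemma forces the third object into $\mathcal{X}$. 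Closure under direct summands follows because $\Hom$ out of a direct sum decomposes, so $F_{X_1 \oplus X_2, T[n]}$ is an isomorphism if and only if each $F_{X_i, T[n]}$ is. Hence $\mathcal{X}$ is a thick subcategory of $\thick_{\mathcal{T}}(T)$ containing $T$, and therefore equals $\thick_{\mathcal{T}}(T)$.

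For the second stage, fix an arbitrary $X \in \thick_{\mathcal{T}}(T)$ and define
$$\mathcal{Y} = \bigl\{Y \in \thick_{\mathcal{T}}(T) \,\big|\, F_{X,Y[n]}: \Hom_{\mathcal{T}}(X, Y[n]) \lra \Hom_{\mathcal{T}'}(F(X), F(Y)[n]) \text{ is an iso for all } n \in \mathbb{Z}\bigr\}.$$
By the first stage, $T \in \mathcal{Y}$. Applying the same five-lemma argument, now with $\Hom_{\mathcal{T}}(X, -)$ and $\Hom_{\mathcal{T}'}(F(X), -)$ in place of the contravariant Hom functors above, shows that $\mathcal{Y}$ is closed under triangles, shifts, and direct summands. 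Hence $\mathcal{Y} = \thick_{\mathcal{T}}(T)$, and since $X \in \thick_{\mathcal{T}}(T)$ was arbitrary, $F_{X,Y[n]}$ is an isomorphism for all $X,Y \in \thick_{\mathcal{T}}(T)$ and all $n$; taking $n=0$ gives full faithfulness.

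There is no real obstacle here — the argument is a textbook dévissage — the only thing worth checking carefully is that the naturality squares in the five-lemma ladder genuinely commute, which is the content of $F$ being a triangle functor with a natural transformation $F(-)[1] \cong F(-[1])$ compatible with the triangulated structure. Closure under direct summands is what upgrades the conclusion from "triangulated subcategory generated by $T$" to the full thick subcategory $\thick_{\mathcal{T}}(T)$.
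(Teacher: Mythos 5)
Your proof is correct and is the standard two-stage d\'evissage; the paper itself does not prove this lemma but simply cites \cite[Lemma 8.4]{Iyama2013b}, and your argument is essentially the one given there (and is the same technique the paper deploys itself in the proof of Lemma \ref{3.5}). Nothing is missing: the shift-closure supplied by the ``for all $n$'' quantifier is exactly what makes the five-lemma applicable at every position of the ladder, and closure under summands is correctly invoked to reach all of $\thick_{\mathcal{T}}(T)$.
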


We generalizes \cite[Lemma 8.5]{Iyama2013b} to dg algebra case in the following.
\begin{Lem}\label{3.4} Let $X$ be a dg $A$-module and $\Lambda=\HOM_A(X,X)$. The we have a triangle equivalence between $\thick_{\K{A}}(X)$ and $\per{\Lambda}$.
\end{Lem}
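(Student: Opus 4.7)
The plan is to use the dg functor $\HOM_A(X,-)$ to land $X$ on the rank-one free module $\Lambda$, then appeal to Lemma~\ref{Ia} to get full faithfulness on $\thick_{\K{A}}(X)$ and to Lemma~\ref{thick} to land the image inside $\per{\Lambda}$.

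\medskip
\textbf{Step 1: Construct the functor.} Since $\Lambda=\HOM_A(X,X)$ acts on $\HOM_A(X,Y)$ by pre-composition for every dg $A$-module $Y$, the assignment $Y\mapsto \HOM_A(X,Y)$ defines a dg functor $\mathscr{C}(A)\lra \mathscr{C}(\Lambda)$. Passing to the stable category with respect to the Frobenius structure introduced in Section~\ref{dg modules}, this descends to a triangle functor
\[
F:\K{A}\lra \K{\Lambda}\lra \D{\Lambda},
\]
where the second arrow is the canonical localization.

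\medskip
\textbf{Step 2: Apply Lemma \ref{Ia} with $T=X$.} A direct computation gives $F(X)=\HOM_A(X,X)=\Lambda$ as a dg right $\Lambda$-module, and for any $n\in\mathbb{Z}$ the map induced by $F$ factors as
\[
\Hom_{\K{A}}(X,X[n])\;\cong\;H^{n}\bigl(\HOM_A(X,X)\bigr)\;=\;H^{n}(\Lambda)\;\cong\;\Hom_{\D{\Lambda}}(\Lambda,\Lambda[n]),
\]
using the isomorphism $\Hom_{\K{A}}(X,X[n])\simeq H^{n}(\HOM_A(X,X))$ recalled in Section~\ref{dg modules}, and the fact that $\Lambda$ is a homotopically projective compact generator of $\per{\Lambda}$, so that $\Hom_{\D{\Lambda}}(\Lambda,\Lambda[n])=H^{n}(\Lambda)$. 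Hence Lemma~\ref{Ia} yields that
\[
F:\thick_{\K{A}}(X)\lra \D{\Lambda}
\]
is fully faithful.

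\medskip
\textbf{Step 3: Place the image inside $\per{\Lambda}$ and conclude essential surjectivity.} Since $F(X)=\Lambda$, Lemma~\ref{thick} gives
\[
F\bigl(\thick_{\K{A}}(X)\bigr)\;\subseteq\;\thick_{\D{\Lambda}}(\Lambda)\;=\;\per{\Lambda}.
\]
Thus $F$ corestricts to a fully faithful triangle functor $\thick_{\K{A}}(X)\to \per{\Lambda}$. Its essential image is a triangulated subcategory of $\per{\Lambda}$ containing $\Lambda$; moreover, because $\thick_{\K{A}}(X)$ is (by definition) closed under direct summands in $\K{A}$ and $F$ is fully faithful, the essential image is closed under direct summands in $\per{\Lambda}$, i.e.\ thick. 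Since $\per{\Lambda}=\thick_{\D{\Lambda}}(\Lambda)$ is the smallest such thick subcategory, the image must equal $\per{\Lambda}$, giving the desired triangle equivalence.

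\medskip
\textbf{Main obstacle.} The non-routine point is Step~3: identifying the essential image with all of $\per{\Lambda}$. Full faithfulness and the fact that $\Lambda$ lies in the image are easy; what is slightly delicate is to know that $\thick_{\K{A}}(X)$ is itself idempotent complete so that the image inherits closure under direct summands. This is automatic from the convention that ``thick'' means triangulated and closed under summands, but it is the step that must be invoked explicitly, and it is what ultimately promotes the fully faithful embedding to an equivalence.
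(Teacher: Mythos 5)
Your proposal is correct and follows essentially the same route as the paper: construct the triangle functor $\K{A}\to\D{\Lambda}$ from $\HOM_A(X,-)$, check $F(X)=\Lambda$ and that $F_{X,X[n]}$ is the identity on $H^n(\Lambda)$ under the standard identifications, and invoke Lemma~\ref{Ia} for full faithfulness and Lemma~\ref{thick} to land in $\per{\Lambda}$. Your Step~3 on essential surjectivity (the image is a thick subcategory of $\per{\Lambda}$ containing $\Lambda$, hence all of $\per{\Lambda}$) is a detail the paper's proof leaves implicit, and it is worth stating as you do.
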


\begin{proof} Consider the following functor
$$\HOM_A(X,-): \C{A}\lra \C{\Lambda}$$
 which sends $Y$ to a dg $\Lambda$-module $\HOM_A(X,Y)$. A map $f: Y_1\lra Y_2$ in $\C{A}$ yields
  $$\HOM_A(X,f): \HOM_A(X,Y_1)\lra\HOM_A(X,Y_2).$$
Then
this functor sends a null-homotopic morphism of dg $A$-modules to a null-homotopic morphism
of dg $\Lambda$-modules. Therefore, $\HOM_A(X,-)$ induces a triangle functor
$\K{A}\lra\K{\Lambda}$. By composing with the canonical functor
$\K{\Lambda}\lra \D{\Lambda}$, we have a triangle functor $\K{A}\lra\D{\Lambda}$
which sends $X$ to $\Lambda$. We thus get a triangle functor from $\thick_{\K{A}}(X)$ to $\thick_{\D{\Lambda}}(\Lambda)=\per{\Lambda}$ by Lemma \ref{Ia}.
Then we have the following commutative diagram
$$\xymatrix{
\Hom_{\K{A}}(X,X[n])\ar[d]^{\simeq}\ar[r]^{F_{X,X[n]}} & \Hom_{\D{\Lambda}}(\Lambda,\Lambda[n])\ar[d]^{\simeq}\\
H^n(\Lambda) \ar[r]^{\simeq}& H^n(\Lambda), \\
}$$
 for any $n\in\mathbb{Z}$. It completes the proof by Lemma \ref{fully-faithful}.
\end{proof}

\begin{Lem}\label{3.5} Suppose $X$ and $Y$ are dg $A$-modules such that $Y\in\thick_{\K{A}}(X)$. Let $\Lambda=\HOM_A(X,X)$. Then we have the following quasi-isomorphism:
$$\begin{array}{rl}
\HOM_A(Y,Z)\lra \HOM_{\Lambda}(\HOM_A(X,Y),\HOM_A(X,Z))\quad f\mapsto (g\mapsto f\circ g),\end{array}$$
for any dg $A$-module $Z.$
\end{Lem}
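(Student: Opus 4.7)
The plan is to fix the dg $A$-module $Z$ and treat both sides of the proposed quasi-isomorphism as contravariant functors of $Y$ with values in dg $k$-modules. The assignment $f\mapsto(g\mapsto f\circ g)$ is manifestly natural in $Y$, so it defines a morphism of functors $\Phi_Y$. I would then show that
$$\mathcal{S}_Z:=\{Y\in\K{A}\mid \Phi_Y\text{ is a quasi-isomorphism}\}$$
is a thick triangulated subcategory of $\K{A}$ containing $X$. Since by hypothesis $Y\in\thick_{\K{A}}(X)$, this inclusion $\thick_{\K{A}}(X)\subseteq\mathcal{S}_Z$ immediately delivers the conclusion.

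For the base case $Y=X$, the right-hand side is $\HOM_{\Lambda}(\Lambda,\HOM_A(X,Z))$, which is canonically isomorphic as a complex of $k$-modules to $\HOM_A(X,Z)$ via evaluation at $1_X\in\Lambda$; under this identification $\Phi_X$ becomes the identity map, so $X\in\mathcal{S}_Z$. Next, both functors $\HOM_A(-,Z)$ and $\HOM_{\Lambda}(\HOM_A(X,-),\HOM_A(X,Z))$ are triangulated contravariant functors from $\K{A}$ to $\D{k}$: the first is standard, and the second is the composition of the triangle functor $\HOM_A(X,-):\K{A}\to\K{\Lambda}$ constructed in the proof of Lemma \ref{3.4} with the triangulated contravariant functor $\HOM_{\Lambda}(-,\HOM_A(X,Z))$. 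By naturality, $\Phi$ induces a morphism between the two long exact sequences of cohomology attached to any triangle $Y_1\to Y_2\to Y_3\to Y_1[1]$ in $\K{A}$, so the five-lemma gives $Y_3\in\mathcal{S}_Z$ whenever $Y_1,Y_2\in\mathcal{S}_Z$. Closure under the shift $[1]$ is automatic, and closure under direct summands follows from the additivity of both functors together with the fact that a retract of a quasi-isomorphism is a quasi-isomorphism.

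The only delicate point I anticipate is verifying that $\Phi$ is compatible with the triangulated structure, i.e.\ that the connecting morphisms of the two long exact sequences match up under $\Phi$ so that the five-lemma can be invoked. This reduces to the explicit description of triangles in $\K{A}$ recalled in Section \ref{dg modules} as mapping-cone triangles together with the observation that $\HOM_A(X,-)$ carries the canonical Frobenius-split short exact sequences in $\C{A}$ to analogous ones in $\C{\Lambda}$; the composition law $(f,g)\mapsto f\circ g$ being strictly associative, the compatibility is then a direct but mildly tedious diagram chase. Once this naturality check is in place, the three closure properties assemble into $\mathcal{S}_Z\supseteq\thick_{\K{A}}(X)$, finishing the proof.
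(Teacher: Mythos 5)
Your proposal is correct and follows essentially the same route as the paper: a dévissage argument showing that the class of $Y$ for which the map is a quasi-isomorphism is a thick subcategory of $\K{A}$ containing $X$, with the base case handled by the identification $\HOM_{\Lambda}(\Lambda,\HOM_A(X,Z))\simeq\HOM_A(X,Z)$ and the inductive step by the five-lemma on the long exact cohomology sequences. Your direct evaluation-at-$1_X$ argument for the base case is in fact slightly cleaner than the paper's detour through $\Hom_{\D{\Lambda}}(\Lambda,-)\simeq H^i(-)$, and you are more explicit than the paper about checking compatibility with the connecting morphisms.
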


\begin{proof} As in the proof in Lemma \ref{3.4}, the functor $\HOM_A(X,-)$ induces a triangle functor $\K{A}\lra\K{\Lambda}$. Applying the functor $H^i$ to
$$\HOM_A(Y,Z)\lra \HOM_{\Lambda}(\HOM_A(X,Y),\HOM_A(X,Z)),$$
it yields that
$$\Hom_{\K{A}}(Y,Z[i])\lra \Hom_{\K{\Lambda}}(\HOM_A(X,Y),\HOM_A(X,Z)[i]).$$
We now let
$$\mathscr{X}=\{Y\in\K{A}|\;\mbox{for any object}\; Z\in\K{A}, \mbox{the functor}\; \HOM_A(X,-) \;\mbox{induces the following isomorphism:}$$
$$Hom_{\K{A}}(Y,Z[i])\lra \Hom_{\K{\Lambda}}(\HOM_A(X,Y),\HOM_A(X,Z)[i])\}.$$
Then $\mathscr{X}$ is a subcategory of $\K{A}$. Clearly, $\mathscr{X}$ is closed under
shifts and direct summands. Suppose that
$$Y_1\lra Y_2\lra Y_3\lra Y_1[1]$$
is a triangle in $\K{A}$ with $Y_1, Y_2\in \mathscr{X}$. Then we have the following commutative diagram
$$\xymatrix@M=2mm@C=2mm{
\K{A}(Y_1[1],Z[i])\ar^{\simeq}[d] \ar[r]
                &\K{A}(Y_3,Z[i]) \ar[d]\ar[r] &\K{A}(Y_2,Z[i]) \ar^{\simeq}[d]\ar[r] &\K{A}(Y_1,Z[i]) \ar^{\simeq}[d]\\
\K{\Lambda}((X,Y_1[1]),(X,Z)[i]) \ar[r]&\K{\Lambda}((X,Y_3),(X,Z)[i])\ar[r]&\K{\Lambda}((X,Y_2),(X,Z)[i])\ar[r]& \K{\Lambda}((X,Y_1),(X,Z)[i]) .}
$$ It follows that $Y_3\in\mathscr{X}$. Therefore, $\mathscr{X}$ is a thick subcategory of $\K{A}$. If $Y=X$, then
$$\begin{array}{rl}
\Hom_{\K{\Lambda}}(\HOM_A(X,X),\HOM_A(X,Z)[i])\simeq\Hom_{\K{\Lambda}}(\Lambda,\HOM_A(X,Z)[i])\simeq\Hom_{\D{\Lambda}}(\Lambda,\HOM_A(X,Z)[i])\\\simeq H^i(\HOM_A(X,Z))
 \simeq\Hom_{\K{A}}(X,Z[i]).
\end{array}$$
Thus $X\in\mathscr{X}$. Consequently, $\thick_{\K{A}}(X)\subseteq\mathscr{X}$. This completes the proof.
\end{proof}

The following lemma is useful, for convenience we give the proof here. We also thank Professor Bernhard Keller for his discussions in the proof of the following lemma.

\begin{Lem}\label{3.6} Suppose that $X$ and $Y$ are dg $A$-modules such that $Y\in\thick_{\K{A}}(X)$ and  $X\in\thick_{\K{A}}(Y)$. Let $\Lambda=\HOM_A(X,X)$ and $\Gamma=\HOM_A(Y,Y)$. Then there is a triangle equivalence
between $\D{\Lambda}$ and $\D{\Gamma}$.
\end{Lem}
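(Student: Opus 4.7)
The plan is to invoke the dg Morita theorem (Lemma \ref{Morita-lemma}) with the dg $\Gamma$-$\Lambda$-bimodule $M:=\HOM_A(X,Y)$, where $\Gamma$ acts on the left by postcomposition and $\Lambda$ on the right by precomposition. Three hypotheses must be verified: that $M_{\Lambda}$ is a homotopically projective compact generator of $\D{\Lambda}$, and that the canonical map $\Gamma\lra\HOM_{\Lambda}(M,M)$ is a quasi-isomorphism of dg algebras.

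For compactness and generation I would exploit the triangle functor $\HOM_A(X,-)\colon\K{A}\lra\D{\Lambda}$ constructed in the proof of Lemma \ref{3.4}, which sends $X$ to $\Lambda$ and $Y$ to $M$. Applying Lemma \ref{4.2} to $Y\in\thick_{\K{A}}(X)$ yields $M\in\thick_{\D{\Lambda}}(\Lambda)=\per{\Lambda}$, so $M$ is compact; applying the same lemma to $X\in\thick_{\K{A}}(Y)$ yields $\Lambda\in\thick_{\D{\Lambda}}(M)$, so $M$ is a generator since $\Lambda$ is.

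For homotopical projectivity I would work at the level of $\K{\Lambda}$ rather than $\D{\Lambda}$. The regular module $\Lambda$ is homotopically projective via the canonical isomorphism $\HOM_{\Lambda}(\Lambda,N)\simeq N$, and a routine check with the long exact cohomology sequence of $\HOM_{\Lambda}(-,N)$ (applied to an acyclic $N$) together with closure under retracts shows that the homotopically projective dg $\Lambda$-modules form a thick subcategory of $\K{\Lambda}$. Applying the triangle functor $\HOM_A(X,-)\colon\K{A}\lra\K{\Lambda}$ and Lemma \ref{4.2} to $Y\in\thick_{\K{A}}(X)$ then places $M$ inside $\thick_{\K{\Lambda}}(\Lambda)$, hence inside this subcategory, so $M$ is indeed homotopically projective over $\Lambda$. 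The quasi-balanced condition is extracted directly from Lemma \ref{3.5} with $Z=Y$: the map $f\mapsto(g\mapsto f\circ g)$ it produces is visibly multiplicative and unital, so it is a dg algebra quasi-isomorphism $\Gamma\lra\HOM_{\Lambda}(M,M)$. With all three hypotheses in hand, Lemma \ref{Morita-lemma} applied to ${}_{\Gamma}M_{\Lambda}$ delivers the desired triangle equivalence $\D{\Gamma}\simeq\D{\Lambda}$.

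The only step that is not a direct invocation of earlier results is the thickness of the homotopically projective subcategory inside $\K{\Lambda}$; this is the key observation that lets one sidestep an explicit bimodule resolution of $M$. Everything else is a clean assembly of Lemmas \ref{3.4}, \ref{3.5}, \ref{4.2}, and \ref{Morita-lemma}, and the whole argument is symmetric under swapping $X\leftrightarrow Y$---one could equally well run it with the dg $\Lambda$-$\Gamma$-bimodule $\HOM_A(Y,X)$ in place of $M$.
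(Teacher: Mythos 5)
Your proof is correct and follows essentially the same route as the paper: the same bimodule $M=\HOM_A(X,Y)$, compactness and generation of $M_\Lambda$ obtained by pushing the hypotheses $Y\in\thick_{\K{A}}(X)$ and $X\in\thick_{\K{A}}(Y)$ through the functor $\HOM_A(X,-)$ via Lemmas \ref{3.4} and \ref{thick}, the quasi-isomorphism $\Gamma\lra\HOM_{\Lambda}(M,M)$ extracted from Lemma \ref{3.5} with $Z=Y$, and finally the dg Morita theorem (Lemma \ref{Morita-lemma}). If anything you are more careful than the paper: the published proof never checks that $M_{\Lambda}$ is homotopically projective, which is a stated hypothesis of Lemma \ref{Morita-lemma}, whereas your observation that the homotopically projective modules form a thick subcategory of $\K{\Lambda}$ containing $\Lambda$ supplies exactly that missing verification.
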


\begin{proof} Consider the dg $\Gamma$-$\Lambda$-bimodule $\HOM_A(X,Y)$. By Lemma \ref{3.5}, we have the following quasi-isomorphism:
$$\begin{array}{rl}
\HOM_A(Y,Y)\lra \HOM_{\Lambda}(\HOM_A(X,Y),\HOM_A(X,Y))\quad f\mapsto (g\mapsto f\circ g).\end{array}$$
Similarly to the proof of Lemma \ref{3.5}, it follows that
$$\begin{array}{rl}
\HOM_A(X,X)\lra \HOM_{\Gamma^{op}}(\HOM_A(X,Y),\HOM_A(X,Y))\quad f\mapsto (g\mapsto (-1)^{|f||g|}g\circ f).
\end{array}$$
 is a quasi-isomorphism of dg algebras. Then the dg $\Gamma$-$\Lambda$-bimodule $\HOM_A(X,Y)$ is quasi-balanced.

It follows from Lemma \ref{3.4} that $\HOM_A(X,Y)_{\Lambda}\in\per{\Lambda}$
and $_{\Gamma}\HOM_A(X,Y)\in\per{\Gamma^{op}},$
since $Y\in\thick_{\K{A}}(X)$ and  $X\in\thick_{\K{A}}(Y)$. So $\Lambda\in \thick_{\D{\Lambda}}({\HOM_A(X,Y)_{\Lambda}})$.
Then $\thick_{\D{\Lambda}}({\HOM_A(X,Y)_{\Lambda}})=\per{\Lambda}.$
Consequently, $\HOM_A(X,Y)$ is a compact generator for $\D{\Lambda}$. By Lemma \ref{Morita-lemma},
we have a triangle equivalence between $\D{\Lambda}$ and $\D{\Gamma}$
defined by the dg $\Gamma$-$\Lambda$-bimodule $\HOM_A(X,Y)$.
\end{proof}

\begin{Koro}\cite[Proposition 8.3]{Efimov2009} or \cite[Proposition 3.3]{Porta2014}.\label{3.9} Assume that dg $A$-modules $X$ and $Y$ are homotopy equivalent. Then the dg algebras $\Lambda=\HOM_A(X,X)$, $\Gamma=\HOM(Y,Y)$ are quasi-isomorphic. Moreover, we have a triangle equivalence
$\D{\Lambda}\simeq \D{\Gamma}$.
\end{Koro}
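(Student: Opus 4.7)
Plan: A homotopy equivalence between dg $A$-modules $X$ and $Y$ is, by definition, an isomorphism in the homotopy category $\K{A}$, so trivially $X\in\thick_{\K{A}}(Y)$ and $Y\in\thick_{\K{A}}(X)$. Lemma~\ref{3.6} therefore applies directly and produces the triangle equivalence $\D{\Lambda}\simeq\D{\Gamma}$, realised by the quasi-balanced dg $\Gamma$-$\Lambda$-bimodule $P:=\HOM_A(X,Y)$.

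To verify that the dg algebras $\Lambda$ and $\Gamma$ are themselves quasi-isomorphic, fix cocycles $f\in\HOM_A(X,Y)^{0}$ and $g\in\HOM_A(Y,X)^{0}$ witnessing the homotopy equivalence, with $fg-1_Y$ and $gf-1_X$ boundaries. The naive chain map $\lambda\mapsto f\lambda g$ from $\Lambda$ to $\Gamma$ is only an $A_{\infty}$-quasi-isomorphism: its failure to be multiplicative is precisely the coboundary $f\lambda_1(gf-1_X)\lambda_2 g$. To extract an honest zigzag of strict dg-algebra quasi-isomorphisms, I route through $P$ as a common intermediate. The proof of Lemma~\ref{3.6} already supplies the quasi-isomorphism of dg algebras $\Gamma\xrightarrow{\sim}\HOM_\Lambda(P,P)$, $\gamma\mapsto(p\mapsto\gamma\circ p)$, coming from Lemma~\ref{3.5} applied to $Y\in\thick_{\K{A}}(X)$. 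For the other side, I pass to the larger dg algebra $\Omega:=\HOM_A(X\oplus Y,X\oplus Y)$: the orthogonal idempotents $e_X,e_Y\in Z^{0}(\Omega)$ give strict identifications $\Lambda\cong e_X\Omega e_X$ and $\Gamma\cong e_Y\Omega e_Y$, while the degree-zero cocycle in $\Omega$ with off-diagonal entries $f$ and $g$ becomes invertible in $H^{0}(\Omega)$ and conjugates $e_X$ to $e_Y$ at the level of cohomology. Splicing these structural identifications with the quasi-isomorphism from Lemma~\ref{3.5} yields a zigzag of strict dg-algebra quasi-isomorphisms connecting $\Lambda$ and $\Gamma$ via $\Omega$ and its corner subalgebras.

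The main technical obstacle is exactly this strictification step: the obvious $A_{\infty}$-quasi-isomorphism $\lambda\mapsto f\lambda g$ cannot be promoted to a single strict dg-algebra homomorphism, so the proof must route through an auxiliary dg algebra (either $\HOM_\Lambda(P,P)$ via Lemma~\ref{3.5}, or $\Omega$ as above) in which the two idempotent-defined corners $\Lambda$ and $\Gamma$ become quasi-isomorphic by strict maps. Once the quasi-isomorphism of dg algebras is in place, Corollary~\ref{quasi-iso} applied to each leg of the zigzag independently recovers the triangle equivalence $\D{\Lambda}\simeq\D{\Gamma}$, consistent with the one already obtained from Lemma~\ref{3.6}.
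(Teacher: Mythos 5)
Your proof of the triangle equivalence $\D{\Lambda}\simeq\D{\Gamma}$ is exactly the paper's: a homotopy equivalence is an isomorphism in $\K{A}$, hence $\thick_{\K{A}}(X)=\thick_{\K{A}}(Y)$, and Lemma \ref{3.6} applies to the bimodule $\HOM_A(X,Y)$. That part is correct and matches the paper's argument.

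The first assertion of the corollary --- that $\Lambda$ and $\Gamma$ are quasi-isomorphic as dg algebras --- is not proved in the paper at all; it is delegated to \cite{Efimov2009} and \cite{Porta2014}. You attempt it, and your attempt has a genuine gap. The leg $\Gamma\to\HOM_{\Lambda}(P,P)$ from Lemma \ref{3.5} is fine, but there is no companion map $\Lambda\to\HOM_{\Lambda}(P,P)$: right multiplication of $\Lambda$ on $P$ is not right $\Lambda$-linear, and the map that Lemma \ref{3.6} does supply for $\Lambda$ lands in the different algebra $\HOM_{\Gamma^{op}}(P,P)$, so the two legs do not splice into a single zigzag. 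Your substitute via $\Omega=\HOM_A(X\oplus Y,X\oplus Y)$ does not close the gap either: the corner inclusions $e_X\Omega e_X\subseteq\Omega$ are not unital dg algebra maps, so they cannot serve as legs of a zigzag of quasi-isomorphisms, and conjugacy of $e_X$ and $e_Y$ in $H^0(\Omega)$ yields no chain-level homomorphism between the corners --- the matrix with off-diagonal entries $f$ and $g$ is invertible in $Z^0(\Omega)$ only when $f$ and $g$ are honestly inverse isomorphisms of dg modules rather than homotopy inverses, and the claim that conjugate idempotents in $H^0$ have quasi-isomorphic corner dg algebras is essentially the statement you are trying to prove, so this step is circular. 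The proofs in the cited references instead build an auxiliary dg algebra from the chosen homotopy equivalence $f\colon X\to Y$ (roughly, endomorphisms of $X\oplus Y$ compatible with $f$ up to specified homotopies) whose two projections onto $\Lambda$ and $\Gamma$ are quasi-isomorphisms. The cleanest fix is to keep your Lemma \ref{3.6} argument for the derived equivalence and simply cite the references for the quasi-isomorphism of dg algebras, as the paper does.
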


\begin{Remark}\label{5.5} Let $A$ be a finite dimensional $k$-algebra, and $X$ be a finitely generated $A$-module.
Let $\cpx{P}_X$ be a projective resolution of $X$.
Then we have $\RHom_A(X,X)=\cpx{\Hom_A}(\cpx{P}_X,\cpx{P}_X),$ where $\RHom_A(X,-)$ is the right derived functor.
Suppose $\cpx{P_X}$ and $\cpx{Q_X}$ are two projective resolutions of $X$ which are homotopy equivalent,
and let $\Lambda_1=\cpx{\Hom_A}(\cpx{P_X},\cpx{P_X})$ and $\Lambda_2=\cpx{\Hom_A}(\cpx{Q_X},\cpx{Q_X})$
be two  dg algebras.  Then we have a triangle equivalence between $\D{\Lambda_1}$ and $\D{\Lambda_2}$
by Lemma \ref{3.9}.

\end{Remark}

\begin{proof} We give a new proof of
$\D{\Lambda}\simeq \D{\Gamma}$ by Lemma \ref{3.6} for reader's
convenience. Since dg $A$-modules $X$ and $Y$ are homotopy equivalent,
$$\thick_{\K{A}}(X)=\thick_{\K{A}}(Y).$$ Let $\Lambda=\HOM_A(X,X)$ and $\Gamma=\HOM_A(Y,Y)$. There is a dg $\Gamma$-$\Lambda$-bimodule $\HOM_A(X,Y)$ which yields a triangle equivalence between $\D{\Lambda}$
and $\D{\Gamma}$ by Lemma \ref{3.6}.
\end{proof}

The following is the main result in this section.

\begin{Theo}\label{4.1} Let $M$ be a dg $A$-module in homotopy category $\K{A}$ of dg algebra $A$ and
$$X\lraf{f} M_1\lraf{g} Y\lraf{h} X[1]$$
be a triangle in $\K{A}$ with $M_1\in <M>$, where $<M>$ denotes the full additive subcategory generated by $\cup_{i\in\mathbb{Z}}\add M[i]$. Then there is
a derived equivalence of dg algebras $\HOM(X\oplus M, X\oplus M)$ and $\HOM(Y\oplus M, Y\oplus M)$.
\end{Theo}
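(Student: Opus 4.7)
The plan is to reduce the theorem to a direct application of Lemma \ref{3.6} by showing that the dg $A$-modules $X \oplus M$ and $Y \oplus M$ generate the same thick subcategory of $\K{A}$. Set $N_1 = X \oplus M$ and $N_2 = Y \oplus M$. Lemma \ref{3.6} requires verifying the two containments $N_2 \in \thick_{\K{A}}(N_1)$ and $N_1 \in \thick_{\K{A}}(N_2)$.

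First I would unpack the hypothesis $M_1 \in \langle M \rangle$: since $\langle M\rangle$ consists of direct summands of finite direct sums of shifts of $M$, it is contained in $\thick_{\K{A}}(M)$, so in particular $M_1 \in \thick_{\K{A}}(M) \subseteq \thick_{\K{A}}(N_1)$ and $M_1 \in \thick_{\K{A}}(N_2)$. To verify $N_2 \in \thick_{\K{A}}(N_1)$, note that $M$ is trivially in $\thick_{\K{A}}(N_1)$; and from the given triangle
$$X \lraf{f} M_1 \lraf{g} Y \lraf{h} X[1],$$
since both $X$ and $M_1$ lie in $\thick_{\K{A}}(N_1)$, the third term $Y$ does too. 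Hence $N_2 = Y \oplus M \in \thick_{\K{A}}(N_1)$.

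The reverse containment is symmetric: rotating the same triangle yields
$$Y[-1] \lraf{-h[-1]} X \lraf{f} M_1 \lraf{g} Y,$$
and since $Y \in \thick_{\K{A}}(N_2)$ and $M_1 \in \thick_{\K{A}}(N_2)$ (via $\thick_{\K{A}}(M)$), closure under the triangulated structure gives $X \in \thick_{\K{A}}(N_2)$, hence $N_1 \in \thick_{\K{A}}(N_2)$.

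With both containments established, Lemma \ref{3.6} applied to the pair $(N_1, N_2)$ produces a triangle equivalence
$$\D{\HOM_A(N_1,N_1)} \simeq \D{\HOM_A(N_2,N_2)},$$
which is precisely a derived equivalence between the dg endomorphism algebras $\HOM_A(X \oplus M, X \oplus M)$ and $\HOM_A(Y \oplus M, Y \oplus M)$. I do not expect any real obstacle here: all the technical work (constructing the quasi-balanced dg bimodule $\HOM_A(N_1,N_2)$, checking compact generation, and invoking the Morita-type Lemma \ref{Morita-lemma}) has already been carried out in Lemmas \ref{3.4}, \ref{3.5}, and \ref{3.6}. The only thing to watch is ensuring the hypothesis $M_1 \in \langle M \rangle$ is used correctly to land inside $\thick_{\K{A}}(M)$, so that the closure of $\thick_{\K{A}}(N_i)$ under triangles does the rest.
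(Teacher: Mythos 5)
Your proposal is correct and follows essentially the same route as the paper: both arguments reduce the theorem to Lemma \ref{3.6} by using the given triangle together with $M_1\in\langle M\rangle\subseteq\thick_{\K{A}}(M)$ to show that $X\oplus M$ and $Y\oplus M$ generate the same thick subcategory of $\K{A}$. The paper phrases the containments via the summed triangle $X\to M_1\oplus M\to Y\oplus M\to X[1]$ rather than your rotation argument, but this is only a cosmetic difference.
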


\begin{proof} Suppose that $M$ is a dg $A$-module in $\K{A}$.
Let
$$X\lraf{f} M_1\lraf{g} Y\lraf{h} X[1]$$
be a triangle in $\K{A}$ with $M_1\in <M>$. Then there is a triangle
$$
X\lraf{f} M_1\oplus M\lraf{g} Y\oplus M\lraf{h} X[1].
$$
Therefore, $Y\oplus M\in\thick(X\oplus M)$ and $X\oplus M\in\thick(Y\oplus M)$. Set $U=X\oplus M$ and $V=Y\oplus M$. By Lemma \ref{3.6}, the dg bimodule $\HOM_A(U,V)$ defines a derived equivalence
between $\HOM(X\oplus M, X\oplus M)$ and $\HOM(Y\oplus M, Y\oplus M)$.
\end{proof}

By Theorem \ref{4.1} and \cite [Theorem 4.1]{Dugas2015}, we can restate the above theorem for the case of a $k$-algebra $A.$

\begin{Koro}\label{4.2} Suppose that $A$ is a $k$-algebra and $\cpx{M}$ is a complex in $\K{A}$.
Let
$$\cpx{X}\lraf{\cpx{f}} \cpx{M_1}\lraf{\cpx{g}} \cpx{Y}\lraf{\cpx{h}} \cpx{X}[1]$$
 be a triangle in $\K{A}$ with $\cpx{M_1}\in <\cpx{M}>$.  Assume that $\cpx{f}$ is a left $<\cpx{M}>$-approximation of $\cpx{X}$ and $\cpx{g}$ is a right  $<\cpx{M}>$-approximation of $\cpx{Y}$. Then

\smallskip
$(1)$. There is a derived equivalence between dg algebras $\cpx{\Hom_A}(\cpx{X}\oplus \cpx{M}, \cpx{X}\oplus \cpx{M})$ and $\cpx{\Hom_A}(\cpx{Y}\oplus \cpx{M}, \cpx{Y}\oplus \cpx{M})$.

$(2)$. The cohomology rings $H^{\mathbb{Z}}(\cpx{X}\oplus \cpx{M})$  and $H^{\mathbb{Z}}(\cpx{Y}\oplus \cpx{M})$ are also derived equivalent, where
$H^{\mathbb{Z}}(\cpx{X}\oplus \cpx{M})=\bigoplus_{i\in\mathbb{Z}}\Hom_{\K{A}}(\cpx{X}\oplus \cpx{M}, (\cpx{X}\oplus \cpx{M})[i])$.
\end{Koro}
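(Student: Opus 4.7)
The plan is to handle the two parts separately, with part (1) being a direct specialization of Theorem \ref{4.1} and part (2) being an invocation of the cited result of Dugas.

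For part (1), I would view the $k$-algebra $A$ as a differential graded algebra concentrated in degree zero. Under this identification, the homotopy category $\K{A}$ in the statement coincides with the homotopy category of the dg algebra $A$ used in Theorem \ref{4.1}, and the complex $\cpx{M}$ becomes a dg $A$-module. The triangle $\cpx{X}\lraf{\cpx{f}}\cpx{M_1}\lraf{\cpx{g}}\cpx{Y}\lraf{\cpx{h}}\cpx{X}[1]$ with $\cpx{M_1}\in <\cpx{M}>$ is exactly the input of Theorem \ref{4.1}. Applying that theorem verbatim yields the derived equivalence between $\cpx{\Hom_A}(\cpx{X}\oplus \cpx{M},\cpx{X}\oplus \cpx{M})$ and $\cpx{\Hom_A}(\cpx{Y}\oplus \cpx{M},\cpx{Y}\oplus \cpx{M})$. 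Observe that the approximation hypotheses on $\cpx{f}$ and $\cpx{g}$ play no role in this part, since Theorem \ref{4.1} only requires the middle term to lie in $<\cpx{M}>$.

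For part (2), the strategy is to recognize the cohomology ring as a $\Phi$-Auslander--Yoneda algebra and then quote \cite[Theorem 4.1]{Dugas2015}. Concretely, $H^{\mathbb{Z}}(\cpx{X}\oplus \cpx{M})=\bigoplus_{i\in\mathbb{Z}}\Hom_{\K{A}}(\cpx{X}\oplus \cpx{M},(\cpx{X}\oplus \cpx{M})[i])$ is, by definition, the graded endomorphism algebra of $\cpx{X}\oplus \cpx{M}$ in $\K{A}$ for $\Phi=\mathbb{Z}$, and similarly for $\cpx{Y}\oplus \cpx{M}$. The left $<\cpx{M}>$-approximation hypothesis on $\cpx{f}$ and the right $<\cpx{M}>$-approximation hypothesis on $\cpx{g}$ are precisely the conditions under which Dugas' theorem produces a derived equivalence between the corresponding Yoneda-type algebras. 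Invoking \cite[Theorem 4.1]{Dugas2015} to the approximation triangle yields the desired derived equivalence of the two cohomology rings.

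The main subtlety I anticipate is purely bookkeeping: one must check that the conventions for left and right $<\cpx{M}>$-approximations in the present paper agree with those used by Dugas, and that the Yoneda-algebra structure obtained from the graded $\Hom$-groups in $\K{A}$ matches the $\Phi$-Auslander--Yoneda algebra with $\Phi=\mathbb{Z}$. Neither of these is genuinely hard, but both should be made explicit to justify the direct appeal to the cited theorem. No further calculation is required beyond invoking Theorem \ref{4.1} and \cite[Theorem 4.1]{Dugas2015}.
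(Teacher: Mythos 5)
Your proposal matches the paper's own (implicit) justification: the paper states this corollary without a separate proof, deriving part (1) directly from Theorem \ref{4.1} applied to $A$ viewed as a dg algebra concentrated in degree zero, and part (2) from \cite[Theorem 4.1]{Dugas2015} applied to the approximation triangle in the orbit category $\K{A}/[1]$, whose endomorphism rings are exactly the graded rings $H^{\mathbb{Z}}(-)$. Your observation that the approximation hypotheses are needed only for part (2) is also consistent with the paper.
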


\begin{Remark}
There is a derived equivalence between dg algebras $\cpx{\Hom_A}(\cpx{X}\oplus \cpx{M}, \cpx{X}\oplus \cpx{M})$ and $\cpx{\Hom_A}(\cpx{Y}\oplus \cpx{M}, \cpx{Y}\oplus \cpx{M})$.
 Under some mild conditions, we get the cohomology rings of these dg algebra are derived equivalent. This gives
an affirmative answer to a problem of Dugas \cite{Dugas2015} in some special case.

\end{Remark}

\begin{Koro}\cite[Theorem 5.2]{Dugas2015} Let $A$ be a symmetric algebra and $\cpx{X}$ and $\cpx{M}$ be any complexes in $\Kb{\rpmodcat{A}}$. Then there exists a left
$<\cpx{M}>$-approximation $\cpx{X}\lraf{\cpx{f}}\cpx{M'}$ of $\cpx{X}$ in $\Kb{A}$. If $\cpx{Y}=\cone(\cpx{f})$, we get the derived equivalence between
$\cpx{\Hom_A}(X\oplus M, X\oplus M)$ and $\cpx{\Hom_A}(Y\oplus M, Y\oplus M)$.
\end{Koro}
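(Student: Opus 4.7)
The plan is to invoke Corollary~\ref{4.2}, so I need to exhibit a triangle $\cpx{X}\lraf{\cpx{f}}\cpx{M'}\lraf{\cpx{g}}\cpx{Y}\lraf{\cpx{h}}\cpx{X}[1]$ in $\Kb{\rpmodcat{A}}$ with $\cpx{M'}\in <\cpx{M}>$ and $\cpx{Y}=\cone(\cpx{f})$ such that $\cpx{f}$ is a left $<\cpx{M}>$-approximation and $\cpx{g}$ is a right $<\cpx{M}>$-approximation. The construction of $\cpx{f}$ is elementary and uses only that $A$ is finite dimensional: since $\cpx{X},\cpx{M}\in\Kb{\rpmodcat{A}}$, each $\Hom_{\Kb{A}}(\cpx{X},\cpx{M}[i])$ is a finite dimensional $k$-space, and it vanishes for $|i|\gg 0$. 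Choose a $k$-basis in each nonzero degree $i$, of cardinality $n_i$, and assemble these basis morphisms into one map $\cpx{f}:\cpx{X}\lra\cpx{M'}:=\bigoplus_{i\in\mathbb{Z}}\cpx{M}[i]^{n_i}$. Then $\cpx{M'}\in <\cpx{M}>$, and any morphism from $\cpx{X}$ to an object of $<\cpx{M}>$ is a $k$-linear combination of basis elements, so it factors through $\cpx{f}$; hence $\cpx{f}$ is a left $<\cpx{M}>$-approximation.

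Next, complete $\cpx{f}$ to a triangle as above and verify that $\cpx{g}:\cpx{M'}\lra\cpx{Y}$ is a right $<\cpx{M}>$-approximation. For any $\cpx{N}\in <\cpx{M}>$, applying $\Hom_{\Kb{A}}(\cpx{N},-)$ produces the exact sequence
$$\Hom(\cpx{N},\cpx{M'})\lraf{\cpx{g}_*}\Hom(\cpx{N},\cpx{Y})\lraf{\partial}\Hom(\cpx{N},\cpx{X}[1])\lraf{\cpx{f}[1]_*}\Hom(\cpx{N},\cpx{M'}[1]),$$
so surjectivity of $\cpx{g}_*$ amounts to injectivity of $\cpx{f}[1]_*$. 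This is where the symmetric hypothesis enters. For a symmetric algebra $A$ the Nakayama functor on $\rpmodcat{A}$ is isomorphic to the identity, and consequently $\Kb{\rpmodcat{A}}$ carries a bifunctorial Serre pairing
$$\Hom_{\Kb{A}}(\cpx{U},\cpx{V})\simeq D\Hom_{\Kb{A}}(\cpx{V},\cpx{U}),\qquad D=\Hom_k(-,k),$$
natural in both variables. Under this pairing $\cpx{f}[1]_*$ is $k$-dual to the pullback $\cpx{f}[1]^*:\Hom(\cpx{M'}[1],\cpx{N})\lra\Hom(\cpx{X}[1],\cpx{N})$. But $\cpx{f}[1]^*$ is surjective: since $<\cpx{M}>$ is shift-closed, $\cpx{N}[-1]\in <\cpx{M}>$, so the left approximation property of $\cpx{f}$ gives surjectivity of $\cpx{f}^*:\Hom(\cpx{M'},\cpx{N}[-1])\lra\Hom(\cpx{X},\cpx{N}[-1])$, which is the same map as $\cpx{f}[1]^*$ after shifting. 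Consequently $\cpx{f}[1]_*$ is injective, $\partial=0$, and $\cpx{g}$ is a right $<\cpx{M}>$-approximation.

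With both approximation hypotheses confirmed, Corollary~\ref{4.2} applies and yields the derived equivalence between $\cpx{\Hom_A}(\cpx{X}\oplus\cpx{M},\cpx{X}\oplus\cpx{M})$ and $\cpx{\Hom_A}(\cpx{Y}\oplus\cpx{M},\cpx{Y}\oplus\cpx{M})$. The principal obstacle in this plan is the middle paragraph: the symmetric hypothesis is precisely what lets one invoke a natural Serre duality on $\Kb{\rpmodcat{A}}$ with trivial Nakayama twist, and one must check that the identification of $\cpx{f}[1]_*$ with the $k$-dual of $\cpx{f}[1]^*$ is genuinely bifunctorial, including the correct bookkeeping of shifts. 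Without symmetry the Serre functor would carry a nontrivial Nakayama twist, and the bridge from a left to a right approximation would fail in general.
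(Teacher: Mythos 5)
Your proof is correct, but it takes a genuinely different (more self-contained) route than the paper's. The paper disposes of your first two paragraphs with a single citation: it invokes \cite[Theorem 4.1]{Dugas2015} to produce a triangle $\cpx{X}\lraf{\cpx{f}}\cpx{M'}\lraf{\cpx{g}}\cpx{Y}\lraf{\cpx{h}}\cpx{X}[1]$ in which $\cpx{f}$ is a left $<\cpx{M}>$-approximation and $\cpx{g}$ a right one, and then concludes by Theorem \ref{4.1}. What you wrote out -- the finite-dimensionality construction of the left approximation and the Serre-duality argument (trivial Nakayama twist for a symmetric algebra, so $\Hom_{\Kb{A}}(\cpx{U},\cpx{V})\simeq D\Hom_{\Kb{A}}(\cpx{V},\cpx{U})$ naturally) converting the left approximation into a right approximation of the cone map -- is essentially a reproof of that cited result of Dugas, and it does go through. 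One remark: for the conclusion actually asserted in this corollary you are doing more than is needed. Theorem \ref{4.1} requires only a triangle whose middle term lies in $<\cpx{M}>$; neither approximation property enters into the derived equivalence of the dg endomorphism algebras. The existence of the left approximation is needed only because the statement asserts it (and to define $\cpx{Y}=\cone(\cpx{f})$ canonically), while the right-approximation property of $\cpx{g}$ -- your middle paragraph, where all the real work and the symmetric hypothesis sit -- is only relevant if one also wants the derived equivalence of the cohomology rings as in part (2) of the preceding corollary. So your argument buys self-containedness and the extra approximation data at the cost of length; the paper buys brevity by outsourcing the triangle construction to Dugas.
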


\begin{proof} Let $\widetilde{\K{A}}$ denote the orbit category $\K{A}/[1]$. Then there is a triangle
$$\cpx{X}\lraf{\cpx{f}}\cpx{M'}\lraf{\cpx{g}} \cpx{Y}\lraf{h} \cpx{X}[1]$$
 such that $\cpx{M'}\in\add_{\widetilde{\K{A}}} M$, $\cpx{f}$ is a left $<\cpx{M}>$-approximation and $\cpx{g}$ is a right $<\cpx{M}>$-approximation by \cite[Theorem 4.1]{Dugas2015}.
It completes the proof by Theorem \ref{4.1}.
\end{proof}

\begin{exm}
  \cite[Section 7]{Dugas2015} We now show how to realize dg algebras in one concrete case. Set
$$A=k[x,y]/(x^n-y^s,xy)$$
and consider
$$\cpx{T}=A\lraf{(x,0)}A\oplus A$$
 concentrated in degree $-1$ and $0$.
Write  $\cpx{T}=\cpx{T}_1\oplus\cpx{T}_2$, where
$$\cpx{T}_1:=(0\lraf{0}A)\mbox{~~~and~~~} \cpx{T}_2:=(A\lraf{x}A). $$
Then we conclude that
 $$\cpx{T}_2\lraf{\left[\begin{smallmatrix}\gamma\\\gamma\epsilon \end{smallmatrix}\right]}\cpx{T}_1[1]\oplus \cpx{T}_1$$
  is a left
$<\cpx{T}_1>$-approximation, yielding the following triangle in $\Kb{\rpmodcat{A}}$
$$
(A\lraf{x}A) \lraf{(1,y)} (A\lraf{0}A)\lraf{(x,1)}(A\lraf{y}A)\lra(A\lraf{x}A)[1],
$$
where $$ \xymatrix@M=1mm{
A\ar[d]_{\gamma:\quad}^1 \ar[r]^x & A \ar^{,}[d]\\
A \ar[r] & 0
}\xymatrix@M=1mm{
A\ar_{\epsilon: \quad}[d] \ar[r]^x & A\ar[d]^y\ar[r] & 0\ar^{.}[d]\\
0\ar[r]& \ar[r]A \ar[r]^x & A
}$$
Theorem \ref{4.1} thus shows that two dg algebras
$$\left[\begin{smallmatrix}\xymatrix@M=1mm{ A_0} & \xymatrix@M=1mm{ A_0\ar[r]^{-x} & A_1}\\\xymatrix@M=1mm{ A_{-1}\ar[d]_{x}\\ A_0}  &
\xymatrix@M=1mm{
A_{-1}\ar[d]^x \ar[r]^{x} & A_0\ar[d]_{x}\\
A_0 \ar[r]^{-x} & A_1
}\end{smallmatrix}\right], \left[\begin{smallmatrix}\xymatrix@M=1mm{ A_0} & \xymatrix@M=1mm{ A_0\ar[r]^{-y} & A_1}\\\xymatrix@M=1mm{ A_{-1}\ar[d]_{y}\\ A_0}  &
\xymatrix@M=1mm{
A_{-1}\ar[d]^y \ar[r]^{y} & A_0\ar[d]_{y}\\
A_0 \ar[r]^{-y} & A_1
}\end{smallmatrix}\right]$$ are derived equivalent, where $A_{-1}=A_0=A_1=A$ concentrate in degree $-1,0,1$, respectively.
\end{exm}

\section{Derived equivalence of dg-algebras induced from standard derived equivalences of finite dimensional algebras}\label{section 5}
Starting from a standard derived equivalence of finite dimensional algebras, we construct in this section
derived equivalences of two dg algebras. Moreover, under some conditions, the cohomology rings of these dg
algebra are also derived equivalent.

Throughout this section, suppose $k$ is a field. Let $A$ and $B$ be finite dimensional $k$-algebras.
Recall that a
standard derived equivalence between derived categories
$\Df{\rModcat{A}}$ and $\Df{\rModcat{B}}$ is an exact functor such that it is an
equivalence and is isomorphic to $\RHom(\cpx{X},-)$ for some object
$\cpx{X}$ of $\Db{\rModcat{A\otimes B}}$ \cite{Rickard1991}. An object $\cpx{X}$
of $\Db{\rModcat{A\otimes B}}$ is called a two-sided tilting complex if
it induces such an equivalence.

\begin{Lem} \cite[Lemma 2.1]{Hu2010}
Let $F: \D{\rModcat{A}}\lra \D{\rModcat{B}}$ be a derived equivalence.
 For each $A$-module $X$, the image $F(X)$ is isomorphic, in $\D{\rModcat{B}}$, to a complex $\cpx{\bar{T}_X}$ of the form
 $$0\lra \bar{T}_X^0\lra \bar{T}_X^1\lra\cdots\lra \bar{T}_X^n\lra 0$$
 with $\bar{T}_X^i$ projective for all $1\leq i\leq n$. Moreover,  if $X$ admits a projective resolution $\cpx{P}_X$ with $P_X^i$ finitely generated for $0\leq i\leq m$ with $m\geq n$. Then $\bar{T}_X^i$ can be chosen to be finitely generated for all $0\leq i\leq n$ and $\bar{T}_X^0$ admits a projective resolution $\cpx{Q}$
where $Q^i$ is finitely generated for $-m\leq i\leq 0.$
\label{lemma-modCompForm}
\end{Lem}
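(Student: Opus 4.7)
The plan is to exploit the shape of $F^{-1}(B)$ furnished by Lemma~\ref{lemma-tiltCompForm}, and to transport it to an arbitrary $F(X)$ via the adjunction coming from the quasi-inverse pair $(F,F^{-1})$. Applying that earlier lemma to the derived equivalence $F^{-1}\colon\D{\rModcat{B}}\to\D{\rModcat{A}}$, I may assume $F^{-1}(B)\simeq\cpx{T}$ in $\D{\rModcat{A}}$ with $\cpx{T}\in\Kb{\rpmodcat{A}}$ of the form $0\to T^{-n}\to\cdots\to T^{0}\to 0$ and each $T^{-i}$ a finitely generated projective $A$-module.

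The first step is to bound the cohomology of $F(X)$. Using the isomorphisms
$$
H^{i}(F(X))\;\cong\;\Hom_{\D{\rModcat{B}}}(B,F(X)[i])\;\cong\;\Hom_{\D{\rModcat{A}}}(\cpx{T},X[i]),
$$
and noting that $\cpx{T}$ is a complex of projectives supported in degrees $[-n,0]$ while $X$ sits in degree zero, a direct chain-level computation (using that $\Hom_{\D{\rModcat{A}}}(\cpx{T},-)=\Hom_{\K{\rModcat{A}}}(\cpx{T},-)$ since $\cpx{T}$ is a complex of projectives) shows this group vanishes for $i\notin[0,n]$. Hence $F(X)$ has cohomology concentrated in $[0,n]$. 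I would then choose any projective resolution $\cpx{P}\to F(X)$ with $P^{i}=0$ for $i>n$ and $P^{i}$ projective for $i\leq n$; exactness of $\cpx{P}$ in negative degrees means that, on setting $\bar{T}_{X}^{0}:=\Coker(P^{-1}\to P^{0})$ and $\bar{T}_{X}^{i}:=P^{i}$ for $1\leq i\leq n$, the soft truncation
$$
\cpx{\bar{T}_{X}}\colon\;0\to\bar{T}_{X}^{0}\to\bar{T}_{X}^{1}\to\cdots\to\bar{T}_{X}^{n}\to 0
$$
is quasi-isomorphic to $\cpx{P}\simeq F(X)$, which settles the first half of the lemma.

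For the refined finiteness assertion, the idea is to import finite generation from $\cpx{P}_{X}$. Writing $Y=\Omega^{m+1}X$ for the $(m+1)$-th syzygy of $X$, the short exact sequence of complexes $0\to\sigma^{<-m}\cpx{P}_{X}\to\cpx{P}_{X}\to\sigma^{\geq -m}\cpx{P}_{X}\to 0$ (with $\sigma^{\geq -m}$ the brutal truncation) and the identifications $\cpx{P}_{X}\simeq X$ and $\sigma^{<-m}\cpx{P}_{X}\simeq Y[m+1]$ produce a distinguished triangle
$$
Y[m+1]\to X\to\sigma^{\geq -m}\cpx{P}_{X}\to Y[m+2]
$$
in $\D{\rModcat{A}}$. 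Since $F$ restricts to an equivalence $\Kb{\rpmodcat{A}}\simeq\Kb{\rpmodcat{B}}$ by condition~(5) of derived equivalence in the preliminaries, the image $F(\sigma^{\geq -m}\cpx{P}_{X})$ is a bounded complex of finitely generated projective $B$-modules with terms concentrated in $[-m,n]$. Applying $F$ to the triangle and splicing this finitely generated bounded piece with any projective resolution of $F(Y)$---which, by the first part applied to the module $Y$, has cohomology in $[0,n]$---yields a projective resolution $\cpx{P}\to F(X)$ whose terms are finitely generated in degrees $[-m,n]$. The soft truncation as in the first part then produces $\cpx{\bar{T}_{X}}$ with all $\bar{T}_{X}^{i}$ finitely generated for $0\leq i\leq n$, and the left tail of $\cpx{P}$ supplies the projective resolution $\cpx{Q}$ of $\bar{T}_{X}^{0}$ with $Q^{i}$ finitely generated for $-m\leq i\leq 0$.

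The principal obstacle I expect is this final splicing step: turning the image triangle $F(Y)[m+1]\to F(X)\to F(\sigma^{\geq -m}\cpx{P}_{X})\to F(Y)[m+2]$ into an honest projective resolution of $F(X)$ whose finite generation is controlled exactly on the interval $[-m,n]$ requires careful bookkeeping, most naturally by an induction on $m$ shifting the finite-generation threshold one degree at a time, or by constructing an explicit homotopically projective replacement compatible with the connecting morphism $F(\sigma^{\geq -m}\cpx{P}_{X})\to F(Y)[m+2]$.
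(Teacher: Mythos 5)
The paper offers no proof of this statement---it is quoted directly from \cite[Lemma 2.1]{Hu2010}---so your argument has to stand on its own. Its first half does: normalizing $F(A)$ to lie in degrees $[0,n]$, Lemma \ref{lemma-tiltCompForm} puts $\cpx{T}=F^{-1}(B)$ in $\Kb{\rpmodcat{A}}$ in degrees $[-n,0]$; the computation $H^i(F(X))\cong\Hom_{\K{\rModcat{A}}}(\cpx{T},X[i])$ (valid because $\cpx{T}$ is a bounded complex of projectives, hence homotopically projective) kills cohomology outside $[0,n]$; and the good truncation at $0$ of a projective resolution of $F(X)$ vanishing above degree $n$ produces $\cpx{\bar{T}_X}$. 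This is the standard argument and is correct.

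The finiteness half has a genuine gap, and it is exactly the step you flag as ``the principal obstacle.'' In the triangle $F(Y)[m+1]\to F(X)\to F(\sigma^{\geq -m}\cpx{P}_X)\to F(Y)[m+2]$ with $Y=\Omega^{m+1}X$, the complex $F(\sigma^{\geq -m}\cpx{P}_X)$ does lie in $\Kb{\rpmodcat{B}}$ with terms in $[-m,n]$, but to realize $F(X)$ as a cocone you must also choose a projective representative of $F(Y)[m+1]$, whose cohomology sits in $[-m-1,\,n-m-1]$. Any such representative contributes projective summands in every degree $\leq n-m-1$, and since $n-m-1\geq -m$ whenever $n\geq 1$, these summands land inside the window $[-m,-1]$ where finite generation is required; part (1) applied to $Y$ gives no finiteness there, because nothing is assumed about the syzygies of $X$ beyond degree $-m$. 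The one-degree-at-a-time induction you propose via $\Omega X\to P^0\to X\to \Omega X[1]$ suffers the same loss: its base case only controls degrees $\geq n$, so after $m$ steps it yields finite generation of the resolution of $\bar{T}_X^0$ only in degrees $\geq n-m$, which is $n$ degrees short of the asserted $\geq -m$. Closing this requires an input your outline does not contain---for instance realizing $F$ by a two-sided tilting complex with controlled terms and tensoring it degreewise against $\sigma^{\geq -m}\cpx{P}_X$, or transporting through $F$ the characterization of ``finitely $m$-presented'' via commutation of $\Ext^{j}$ with filtered colimits---so as written the second assertion of the lemma is not established.
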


\begin{Lem} \cite[Proposition 3.4]{Hu2010}
Let $F:\Db{\rmodcat{A}}\lra \Db{\rmodcat{B}}$ be a derived equivalence. Then there is an
additive functor
$$\underline{F}: \rstmodcat{A}\ra\rstmodcat{B}$$ sending $X$ to
$\underline{F}(X)$, such that the following diagram
$$\xymatrix{
      \rstmodcat{A}\ar[r]^(.35){{\rm can}}\ar[d]_{\underline{F}} &
      \Db{\rmodcat{A}}/\Kb{\rpmodcat{A}}\ar[d]^{{F}}\\
      \rstmodcat{B}\ar[r]^(.35){{\rm can}} & \Db{\rmodcat{A}}/\Kb{\rpmodcat{B}}
    }$$
   is commutative up to natural isomorphism.\label{2.3}
\end{Lem}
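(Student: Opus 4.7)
The plan is to exploit the fact that any derived equivalence preserves perfect complexes in order to descend $F$ to an equivalence of singularity categories, and then to use the fully faithful embedding of the stable module category into the singularity category. First, since $F$ carries a tilting complex to $B$ and preserves thick subcategories generated by a single object, it restricts to a triangle equivalence $\Kb{\rpmodcat{A}}\lra \Kb{\rpmodcat{B}}$. Consequently $F$ descends to a triangle equivalence of Verdier quotients
$$\bar{F}:\Db{\rmodcat{A}}/\Kb{\rpmodcat{A}}\lra \Db{\rmodcat{B}}/\Kb{\rpmodcat{B}},$$
which is compatible with $F$ through the canonical quotient functors on both sides.

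Next, I would appeal to Buchweitz's classical observation that the canonical functor $\rstmodcat{A}\hra \Db{\rmodcat{A}}/\Kb{\rpmodcat{A}}$, and its counterpart for $B$, are both fully faithful. For an $A$-module $X$, Lemma~\ref{lemma-modCompForm} gives an isomorphism $F(X)\simeq \cpx{\bar{T}}_X$ in $\Db{\rmodcat{B}}$ with $\bar{T}_X^i$ finitely generated projective for $1\leq i\leq n$. Splitting off the projective tail via the brutal truncation at degree $0$ yields a triangle
$$\bar{T}_X^0\lra \cpx{\bar{T}}_X\lra \cpx{P}\lra \bar{T}_X^0[1]$$
with $\cpx{P}\in\Kb{\rpmodcat{B}}$, so in the singularity category we obtain $F(X)\simeq \bar{T}_X^0$. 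After stripping off projective summands one obtains a distinguished stable representative, and I define $\underline{F}(X)$ to be this object of $\rstmodcat{B}$.

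Finally, I would extend $\underline{F}$ to morphisms: a stable map $f:X\lra Y$ in $\rstmodcat{A}$ yields a morphism in the singularity category of $A$; applying $\bar{F}$ and using the full faithfulness of the embedding for $B$, this lifts uniquely to a morphism $\underline{F}(f):\underline{F}(X)\lra \underline{F}(Y)$ in $\rstmodcat{B}$. Additivity and functoriality then follow formally from this uniqueness, and the commutativity of the diagram in the statement is built into the construction. The main obstacle is verifying \emph{well-definedness} of $\underline{F}(X)$: one must show that different choices of the representative $\cpx{\bar{T}}_X$ yield the same object in $\rstmodcat{B}$ after stripping projective summands. This reduces to the assertion that two modules isomorphic in the singularity category differ only by projective summands in $\rstmodcat{B}$, which is once more supplied by Buchweitz's fully faithful embedding.
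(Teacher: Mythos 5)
Your first two steps are sound: $F$ does restrict to a triangle equivalence $\Kb{\rpmodcat{A}}\lra\Kb{\rpmodcat{B}}$ (both sides are the thick subcategories generated by $A$, respectively by the tilting complex $F(A)$), hence descends to the Verdier quotients, and the brutal truncation argument correctly identifies $F(X)$ with $\bar{T}_X^0$ in $\Db{\rmodcat{B}}/\Kb{\rpmodcat{B}}$. The gap is in the third step. The canonical functor $\rstmodcat{A}\lra\Db{\rmodcat{A}}/\Kb{\rpmodcat{A}}$ is \emph{not} fully faithful for a general finite dimensional algebra, and the lemma carries no self-injectivity or Gorenstein hypothesis. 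Buchweitz's theorem gives full faithfulness only for the stable category of Gorenstein projective (maximal Cohen--Macaulay) modules over a Gorenstein algebra; on all of $\rstmodcat{A}$ it already fails for the path algebra of a quiver of type $A_2$: the simple non-projective module $S$ has finite projective dimension, so it becomes zero in the singularity category, yet it is a nonzero object of $\rstmodcat{A}$. In particular your key reduction --- ``two modules isomorphic in the singularity category differ only by projective summands'' --- is false, so the well-definedness of $\underline{F}(X)$ is not established, and the ``unique lifting'' of morphisms through the embedding for $B$ is unavailable, so $\underline{F}$ is not defined on morphisms either.

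The proof the paper relies on (\cite[Proposition 3.4]{Hu2010}, cited rather than reproduced) does not pass morphisms through the singularity category: one fixes the representative $\cpx{\bar{T}}_X$ of $F(X)$ with projective terms in degrees $1,\dots,n$, sets $\underline{F}(X)=\bar{T}_X^0$, shows that for a module map $f:X\ra Y$ the morphism $F(f)$ can be realized by an honest chain map $\cpx{\bar{T}}_X\ra\cpx{\bar{T}}_Y$ (using the specific shape of these complexes), and defines $\underline{F}(f)$ to be the degree-zero component of that chain map taken modulo maps factoring through projectives; well-definedness on objects and morphisms is then verified directly from homotopies and the structure of the complexes. Your argument as written would only go through for self-injective algebras, where the canonical functor is an equivalence.
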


The following theorem is the main result of this section.

\begin{Theo}\label{5.2} Suppose $k$ is a field. Let $F: \D{\rModcat{A}}\lra \D{\rModcat{B}}$ be a standard derived equivalence and
$$\underline{F}: \rstmodcat{A}\lra\rstmodcat{B}$$
be the additive functor induced  by $F$ as in Lemma \ref{2.3}.
Then there is a derived equivalence between dg algebras $\RHom_{A}(A\oplus X,A\oplus X)$ and $\RHom_{B}(B\oplus\underline{F}(X),B\oplus\underline{F}(X))$ for each
$A$-module $X$.
\end{Theo}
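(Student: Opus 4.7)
The plan is to interpose two intermediate dg algebras between the two given ones: first replace $A$ on the source side by an associated tilting complex of $F$, next transport through $F$ to the $B$-side, and finally replace $F(X)$ by $\underline{F}(X)$ up to a projective tail. By Lemma \ref{lemma-tiltCompForm} applied to $F$, I fix an associated tilting complex $\cpx{T}\in\Kb{\rpmodcat{A}}$ with $F(\cpx{T})\cong B$ in $\D{\rModcat{B}}$ and $\End_{\Kb{\rpmodcat{A}}}(\cpx{T})\cong B$, and I let $\cpx{P}_X$ be a deleted projective resolution of $X$ over $A$. Set $M=A\oplus \cpx{P}_X$ and $M'=\cpx{T}\oplus \cpx{P}_X$; both are homotopically projective dg $A$-modules. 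Since $\cpx{T}$ is a tilting complex, $\thick_{\K{A}}(A)=\thick_{\K{A}}(\cpx{T})=\Kb{\rpmodcat{A}}$, and therefore $M'\in\thick_{\K{A}}(M)$ and $M\in\thick_{\K{A}}(M')$. Lemma \ref{3.6} then yields a derived equivalence between the dg algebras $\HOM_A(M,M)$ and $\HOM_A(M',M')$, which by homotopy-projectivity of $M$ and $M'$ are quasi-isomorphic to $\RHom_A(A\oplus X,A\oplus X)$ and $\RHom_A(\cpx{T}\oplus X,\cpx{T}\oplus X)$ respectively.

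For the transport step I use the standardness of $F$: write $F\simeq -\otimesL_A\cpx{\Theta}$ for a two-sided tilting complex $\cpx{\Theta}$. The tensor--hom adjunction, combined with the compactness of $\cpx{\Theta}$ in $\D{\rModcat{B}}$ and the quasi-isomorphism $\RHom_B(\cpx{\Theta},\cpx{\Theta})\simeq A$, produces a quasi-isomorphism of dg algebras
$$\RHom_A(\cpx{T}\oplus X,\cpx{T}\oplus X)\simeq \RHom_B\bigl(F(\cpx{T})\oplus F(X),\,F(\cpx{T})\oplus F(X)\bigr)\simeq \RHom_B(B\oplus F(X),B\oplus F(X)),$$
and Corollary \ref{quasi-iso} upgrades it to a derived equivalence of the associated derived categories.

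Finally I pass from $F(X)$ to $\underline{F}(X)$. By Lemma \ref{lemma-modCompForm} combined with the construction of $\underline{F}$ recalled in Lemma \ref{2.3}, $F(X)\cong \cpx{\bar{T}_X}=(\bar{T}^0_X\ra\bar{T}^1_X\ra\cdots\ra\bar{T}^n_X)$ in $\D{\rModcat{B}}$, with $\bar{T}^i_X$ projective for $i\geq 1$ and $\bar{T}^0_X$ representing $\underline{F}(X)$ in $\rstmodcat{B}$; hence $\bar{T}^0_X$ and $\underline{F}(X)$ differ only by projective direct summands. The brutal truncation at degrees $>0$ gives a degree-wise split short exact sequence of complexes
$$0\lra \sigma^{>0}\cpx{\bar{T}_X}\lra \cpx{\bar{T}_X}\lra \bar{T}^0_X\lra 0,$$
hence a triangle in $\K{B}$ whose first term lies in $\Kb{\rpmodcat{B}}=\thick_{\K{B}}(B)$. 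Combined with the projective-summand comparison of $\bar{T}^0_X$ and $\underline{F}(X)$, this yields $\thick_{\K{B}}(B\oplus F(X))=\thick_{\K{B}}(B\oplus\underline{F}(X))$ after choosing homotopically projective representatives, and a second application of Lemma \ref{3.6} gives a derived equivalence between $\RHom_B(B\oplus F(X),B\oplus F(X))$ and $\RHom_B(B\oplus\underline{F}(X),B\oplus\underline{F}(X))$. Concatenating the three derived equivalences proves the theorem.

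The main obstacle I anticipate is the middle step: verifying that a standard derived equivalence preserves the dg algebra structure on $\RHom$ at the chain level, not merely the induced graded algebra on cohomology. This requires handling the two-sided tilting complex and its adjunctions with some care and uses the standardness hypothesis in an essential way. A secondary technical point is bookkeeping the projective summands that relate $\bar{T}^0_X$ to $\underline{F}(X)$ in the final step, in particular ensuring that the stable identification provided by Lemma \ref{2.3} is enough to equate the two thick subcategories in $\K{B}$.
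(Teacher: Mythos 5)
Your argument is correct and is essentially the paper's own proof with the steps reordered: the paper first transports $A\oplus X$ across $F$ via the chain-level dg algebra map $f\mapsto 1\otimes f$ into the endomorphism dg algebra of a resolution of $F(A)\oplus F(X)$ (exactly your middle step, proved by the same cohomology comparison), and then applies Lemma \ref{3.6} once on the $B$-side, using the shape of $F(A\oplus X)$ from Lemma \ref{lemma-modCompForm} and the transfer between $\thick_{\K{B}}$ and $\thick_{\D{B}}$ for homotopically projective objects to compare with $B\oplus\underline{F}(X)$. Your extra preliminary replacement of $A$ by the associated tilting complex $\cpx{T}$ (so as to land on $B\oplus F(X)$ rather than $F(A)\oplus F(X)$) is harmless, and the two points you flag as delicate --- that the transport is a genuine dg algebra quasi-isomorphism and that one must pass to projective resolutions before invoking Lemma \ref{3.6} --- are precisely the ones the paper treats explicitly.
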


\begin{Remark} If $X\in {}^{\perp}A$, then there is a derived equivalence between dg algebras
$\RHom_{A}(A\oplus X,A\oplus X)$ and $\RHom_{B}(B\oplus\underline{F}(X),B\oplus\underline{F}(X))$ by Theorem \ref{5.2}.
Moreover, the cohomology rings $H^{\mathbb{Z}}(A\oplus X)$  and $H^{\mathbb{Z}}(B\oplus\underline{F}(X))$ of $\RHom_{A}(A\oplus X,A\oplus X)$ and $\RHom_{B}(B\oplus\underline{F}(X),B\oplus\underline{F}(X))$, respectively, are derived
equivalent by \cite[Theorem 1.1]{Pan2014b}.
\end{Remark}

\begin{proof} Let $U=A\oplus X$ and $V=B\oplus\underline{F}(X)$. And let $\cpx{P}_U$ and $\cpx{P_V}$ be projective resolutions of $U$ and $V$, respectively.
Then we have $\RHom_A(U,U)=\cpx{\Hom_A}(\cpx{P}_U,\cpx{P}_U)$ and $\RHom_A(V,V)=\cpx{\Hom_A}(\cpx{P}_V,\cpx{P}_V).$
By Remark \ref{5.5}, the dg endomorphism algebras $\cpx{\Hom_A}(\cpx{P_U},\cpx{P_U})$
and $\cpx{\Hom_A}(\cpx{P_V},\cpx{P_V})$ are unique under derived equivalences.
Let $\Lambda=\cpx{\Hom_A}(\cpx{P_U},\cpx{P_U})$ and $\Gamma=\cpx{\Hom_A}(\cpx{P_V},\cpx{P_V})$. It suffices to show that dg algebras $\Lambda$ and $\Gamma$ are derived equivalent.

Let $\cpx{\bar{T}}_U=F(U)$ and $\cpx{P}_{\cpx{\bar{T}}_U}$ be a projective resolution of $\cpx{\bar{T}}_U.$ Set
$\Lambda'=\cpx{\Hom_B}(\cpx{P}_{\cpx{\bar{T}}_U},\cpx{P}_{\cpx{\bar{T}}_U})$.
We claim that there is a derived equivalence between $\Lambda'$ and $\Gamma$.
In the following, we are going to prove the claim.

It follows from Lemma \ref{lemma-modCompForm} that
$\cpx{\bar{T}}_U$ is isomorphic, in $\D{\modcat{B}}$, to a complex of the form
$$0\lra \bar{T}_U^0\lra \bar{T}_U^1\lra\cdots\lra\bar{T}_U^n\lra 0$$
with $\bar{T}_U^i$ projective for $ 1\leq i\leq n$, $\bar{T}_U^0=\bar{T}^0\oplus \underline{F}(X)$ and
$\bar{T}^0$ is the first term of $\cpx{\bar{T}}$, where $\cpx{\bar{T}}$ is a tilting complex for $B$ by Lemma \ref{lemma-tiltCompForm}, we see
that $\add\cpx{\bar{T}}$ generates $\Kb{\add_{B}B}$ as
triangulated category. All the terms of $\cpx{\bar{T}}$ are in
$\add_{B}B$. From the distinguished triangle
$$\bar{T}^{+}\ra\cpx{\bar{T}}_{U}\ra\bar{T}^{0}_{U}\ra
\bar{T}_{U}^{+}[1],$$ it follows that $\bar{T}^{0}_{U}$ is in the
triangulated subcategory generated by
$\add(\cpx{\bar{T}}_{U})$. Therefore,
$\add\cpx{\bar{T}}_U$ generates $\Kb{\add_{B}V}$ as a triangulated
category. Consequently, $\thick_{\K{B}}(\cpx{\bar{T}}_U)=\thick_{\K{B}}(V).$
The definition of $\cpx{P}_{\cpx{\bar{T}}_U}$ implies $\thick_{\D{B}}(\cpx{P}_{\cpx{\bar{T}}_U})=\thick_{\D{B}}(\cpx{\bar{T}}_U).$
Similarly, $\thick_{\D{B}}(\cpx{P}_V)=\thick_{\D{B}}(V).$
The canonical functor $\K{B}\lraf{q} \D{B}$ induces an equivalence between $\thick_{\K{B}}(\cpx{P}_{\cpx{\bar{T}}_U})$ and $\thick_{\D{B}}(\cpx{P}_{\cpx{\bar{T}}_U})$ by \cite[Remark 1.7]{Hoshino2003a}.
Since $V\in\thick_{\K{B}}(V)=\thick_{\K{B}}(\cpx{\bar{T}}_U),$
Lemma \ref{thick} shows $V\in \thick_{\D{B}}(\cpx{\bar{T}}_U)$. Thus we get $\thick_{\D{B}}(V)\subseteq\thick_{\D{B}}(\cpx{\bar{T}}_U)=\thick_{\D{B}}(\cpx{P}_{\cpx{\bar{T}}_U}).$
Consequently, we obtain $\thick_{\D{B}}(\cpx{P}_V)=\thick_{\D{B}}(V)\subseteq\thick_{\D{B}}(\cpx{P}_{\cpx{\bar{T}}_U}).$
It follows from \cite[Remark 1.7]{Hoshino2003a} that $\thick_{\K{B}}(\cpx{P}_V)\subseteq \thick_{\K{B}}(\cpx{P}_{\cpx{\bar{T}}_U}).$
Similarly, $\thick_{\K{B}}(\cpx{P}_{\cpx{\bar{T}}_U})\subseteq \thick_{\K{B}}(\cpx{P}_V).$
Then there is a dg $\Gamma$-$\Lambda'$-bimodule $\cpx{\Hom_B}(\cpx{P}_{\cpx{\bar{T}}_U},\cpx{P_V})$ which induces the derived equivalence between $\Lambda'$ and $\Gamma $ by Lemma \ref{3.6}. It completes the proof of the claim.

Finally, it suffices to show that $\Lambda$ and $\Lambda'$ are derived equivalent.
Since $F$ is a standard derived equivalence, we write $F\simeq
\cpx{Y}\otimes_A^L-$, where $\cpx{Y}$ is a bounded above complex of
projective $B$-$A$-bimodules \cite{Rickard1991}. Then
$$\RHom_B(\cpx{\bar{T}}_U,\cpx{\bar{T}}_U)\simeq
\cpx{\Hom}_B(\cpx{Y}\otimes_A^L\cpx{P}_U,\cpx{Y}\otimes_A^L\cpx{P}_U)
=\cpx{\Hom}_B(\cpx{Y}\otimes_A\cpx{P}_U,\cpx{Y}\otimes_A\cpx{P}_U)\simeq\displaystyle\oplus_{n\in\mathbb{Z}}\sqcap_{i\in\mathbb{Z}}\Hom_B(M^{p},M^{p+n}).$$
We set $\cpx{M}=\cpx{Y}\otimes_A^L\cpx{P}_U$ and then
$M^p=\displaystyle\oplus_{i\in\mathbb{Z}} Y^{p-i}\otimes_A P_U^i$.
Thus, we have
$$\Hom_B(M^{p},M^{p+n})=\Hom_B(\displaystyle\oplus_{i\in\mathbb{Z}}
Y^{p-i}\otimes_A P_U^i,\displaystyle\oplus_{i\in\mathbb{Z}}
Y^{p-i}\otimes_A P_U^{n+i}).$$
Now, we can define a dg algebra
homomorphism $\cpx{\Hom_A}(\cpx{P}_U,\cpx{P}_U)\lra\cpx{\Hom_B}(\cpx{M},\cpx{M})$ defined by $(f^i)\ra (g^p),$
where $g^p=(Y^{p-i}\otimes
f^i)$. Indeed, we the above dg algebra homomorphism is a
chain map between complexes. Then we get the following commutative
diagram
$$\xymatrix{
\sqcap_{i\in\mathbb{Z}}\Hom_B(P_U^i,P_U^{i+n})\ar[d] \ar[r]^{d}
                &\sqcap_{i\in\mathbb{Z}}\Hom_B(P_U^i,P_U^{i+n+1}) \ar[d]  \\
\sqcap_{i\in\mathbb{Z}}\Hom_B(M^{p},M^{p+n+1}) \ar[r]_{d}
                & \sqcap_{i\in\mathbb{Z}}\Hom_B(M^{p},M^{p+n+1})  ,}
$$
chasing by the following diagram
$$\xymatrix{
(f^i) \ar[d]\ar[rr]^{d}
                &&(df^i-(-1)^nf^{i+1}d) \ar[d]  \\
 ((1\otimes f^i)) \ar[rr]^{d}
                &&(d(1\otimes f^i)-(-1)^n(1\otimes f^{i+1})d)  .}
$$
That is,
$df^i-(-1)^nf^{i+1}d\shortmid\lra((1\otimes d)-(-1)^n(1\otimes f^{i+1})(1\otimes
d),(d\otimes1)-(-1)^n(1\otimes f^{i})(d\otimes1)).$
Therefore, the dg algebras $\cpx{\Hom_A}(\cpx{P}_U,\cpx{P}_U)$ and
$\cpx{\Hom_B}(\cpx{M},\cpx{M})$ are quasi-isomorphic. It follows that
$\Lambda$ and $\Lambda'$ are
quasi-isomorphic. Then the dg algebras $\Lambda$ and
$\Lambda'$ are derived equivalent by Corollary \ref{quasi-iso}.
By the above argument, we conclude that the dg algebras $\Lambda$
and $\Gamma$ are derived equivalent.
\end{proof}


\begin{exm}Let $A=k[x]/(x^n)$. Then $A$ is a representation-finite self-injective algebra.
Denote the indecomposable $A$-module by
$$X_r:=k[x]/(x^r)$$
 for $r=1,2,\cdots,n$.
Theorem \ref{5.2} thus shows that two dg algebras $\RHom_A(A\oplus X_r, A\oplus X_r)$ and $\RHom_A(A\oplus X_{n-r},A\oplus X_{n-r})$ are derived equivalent with $\underline{F}(X_r)=\Omega(X_r)=X_{n-r}$.
 Take the minimal projective $$\cpx{P}_{X_r}=:\cdots\lra A\lraf{f_r}A \lraf{f_{n-r}}A\lraf{f_r} A\lraf{f_{n-r}}A\lra 0$$
and
$$\cpx{P}_{X_{n-r}}=:\cdots\lra A\lraf{f_{n-r}}A \lraf{f_r}A\lraf{f_{n-r}} A\lraf{f_r}A\lra 0$$
of $X_r$ and $X_{n-r}$, respectively, where $f_r:X_n\lra X_n$  by $1+(x^n)\mapsto x^{n-r}+(x^n)$
and $f_{n-r}:X_n\lra X_n$ defined by $1+(x^n)\mapsto x^r+(x^n)$
see \cite{Zheng2015} for more details. Then
two dg algebras $\cpx{\Hom}_A(A\oplus \cpx{P}_{X_r},A\oplus \cpx{P}_{X_r})$ and $\cpx{\Hom}_A(A\oplus \cpx{P}_{X_{n-r}}, A\oplus \cpx{P}_{X_{n-r}})$ are derived equivalent.

It is known that the $\mathbb{N}$-Auslander-Yoneda algebras $\E^{\mathbb{N}}(A\oplus X_r)$ and $\E^{\mathbb{N}}(A\oplus X_{n-r})$ are derived equivalent,
and they are described in terms of quivers with relations \cite{Zheng2015}. Then we get the derived equivalence between dg algebras $\cpx{\Hom}_A(A\oplus \cpx{P}_{X_r},A\oplus \cpx{P}_{X_r})$ and $\cpx{\Hom}_A(A\oplus \cpx{P}_{X_{n-r}}, A\oplus \cpx{P}_{X_{n-r}})$,
and their cohomology rings $\E^{\mathbb{N}}(A\oplus X_r)$ and $\E^{\mathbb{N}}(A\oplus X_{n-r})$ are also derived equivalent.
\end{exm}

\bigskip
\noindent{\bf Acknowledgements.} S. Y. Pan is supported by
National Natural Science Foundation
of China (No. $11201022$), the Fundamental Research Funds for the Central Universities (2015JBM101) and the $111$ Project of China (No. B16002). The authors thank Prof. Wei Hu for his useful suggestions and discussions.


\begin{thebibliography}{10}

\bibitem[Bri99]{Bridgeland1999}
\bgroup\scshape{}T.~Bridgeland\egroup{}, Equivalences of derived categories and
  fourier-mukai transforms,  \emph{Bull. Lond. Math. Soc.} \textbf{31} (1999),
  25--3.

\bibitem[CY15]{Chen2015a}
\bgroup\scshape{}X.~Chen\egroup{} and \bgroup\scshape{}D.~Yang\egroup{},
  Homotopy categories, {L}eavitt path algebras and {G}orenstein projective
  modules,  \emph{Intrer. Math. Res. Notice} \textbf{2015} (2015), 2597--2633.

\bibitem[Dug15]{Dugas2015}
\bgroup\scshape{}A.~Dugas\egroup{}, A construction of derived equivalent pairs
  of symmetric algebras,  \emph{Proc. Amer. Math. Soc.} \textbf{143} (2015),
  2281--2300.


\bibitem[ELO09]{Efimov2009}
\bgroup\scshape{}A.~I. Efimov\egroup{}, \bgroup\scshape{}V.~A. Lunts\egroup{},
  and \bgroup\scshape{}D.~O. Orlov\egroup{}, Deformation theory of objects in
  homotopy and derived categories {I}: General theory,  \emph{Adv. Math.}
  \textbf{222} (2009), 359--401.

\bibitem[Hap88]{Happel1988}
\bgroup\scshape{}D.~Happel\egroup{}, \emph{Triangulated categories in the
  representation theory of finite dimensional algebras}, \textbf{119},
  Cambridge University Press, 1988.

\bibitem[HK03]{Hoshino2003a}
\bgroup\scshape{}M.~Hoshino\egroup{} and \bgroup\scshape{}Y.~Kato\egroup{},
  Tilting complexes associated with a sequence of idempotents,  \emph{J. Pure
  Appl. Algebra} \textbf{183} (2003), 105--124.

\bibitem[HP17]{Hu2017}
\bgroup\scshape{}W.~Hu\egroup{} and \bgroup\scshape{}S.~Y. Pan\egroup{}, Stable
  functors of derived equivalences and gorenstein projective modules,
  \emph{Math. Nach.} \textbf{290} (2017), 1512--1530.

\bibitem[HX10]{Hu2010}
\bgroup\scshape{}W.~Hu\egroup{} and \bgroup\scshape{}C.~C. Xi\egroup{}, Derived
  equivalences and stable equivalences of {M}orita type, {I},  \emph{Nagoya
  Math. J.} \textbf{200} (2010), 107--152.

\bibitem[HX13]{Hu2013}
\bgroup\scshape{}W.~Hu\egroup{} and \bgroup\scshape{}C.~C. Xi\egroup{}, Derived equivalences for {$\Phi$}-{A}uslander-{Y}oneda algebras,
  \emph{Trans. Amer. Math. Soc.} \textbf{365} (2013), 5681--5711.

\bibitem[IT13]{Iyama2013b}
\bgroup\scshape{}O.~Iyama\egroup{} and \bgroup\scshape{}R.~Takahashi\egroup{},
  Tilting and cluster tilting for quotient singularities,  \emph{Math. Ann.}
  \textbf{356} (2013), 1065--1105.

\bibitem[Kel94]{Keller1994}
\bgroup\scshape{}B.~Keller\egroup{}, Deriving dg categories,  \emph{Ann. Sci.
  {\'E}cole Norm. Sup.} \textbf{27} (1994), 63--102.

\bibitem[Kel06]{Keller2006}
\bgroup\scshape{}B.~Keller\egroup{}, On differential graded categories,  in \emph{International {C}ongress
  of {M}athematicians. {V}ol. {II}}, Eur. Math. Soc., Z{\"u}rich, 2006,
  pp.~151--190.

\bibitem[Nee01]{Neeman2001}
\bgroup\scshape{}A.~Neeman\egroup{}, \emph{Triangulated categories}, no. 148,
  Princeton University Press, 2001.

\bibitem[NS12]{Nicolas2012a}
\bgroup\scshape{}P.~Nicolas\egroup{} and \bgroup\scshape{}M.~Saorin\egroup{},
  Generalized tilting theory, \emph{Appl. Categor. Struct.} \textbf{26} (2018), 309--368.

\bibitem[NS14]{Nicolas2014b}
\bgroup\scshape{}P.~Nicolas\egroup{} and \bgroup\scshape{}M.~Saorin\egroup{}, Classical derived functors as fully faithful embeddings,  \emph{arXiv
  preprint arXiv:1403.4726} (2014).

\bibitem[Pan14]{Pan2014b}
\bgroup\scshape{}S.~Y. Pan\egroup{}, Derived equivalences for
  {$\Phi$}-{C}ohen-{M}acaulay {A}uslander-{Y}oneda algebras,  \emph{Algebr.
  Represent. Theory} \textbf{17} (2014), 885--903.

\bibitem[PSY14]{Porta2014}
\bgroup\scshape{}M.~Porta\egroup{}, \bgroup\scshape{}L.~Shaul\egroup{}, and
  \bgroup\scshape{}A.~Yekutieli\egroup{}, Completion by derived double
  centralizer,  \emph{Algebr. Represent. Theor.} \textbf{17} (2014), 481--494.

\bibitem[Ric89]{Rickard1989a}
\bgroup\scshape{}J.~Rickard\egroup{}, Morita theory for derived categories,
  \emph{J. London Math. Soc.} \textbf{39} (1989), 436--456.

\bibitem[Ric91]{Rickard1991}
\bgroup\scshape{}J.~Rickard\egroup{}, Derived equivalences as derived functors,  \emph{J. London Math. Soc.}
  \textbf{43} (1991), 37--48.

\bibitem[Zhe15]{Zheng2015}
\bgroup\scshape{}R.~Zheng\egroup{}, On the {A}uslander-{Y}oneda algebras of
  modules over $k[x]/(x^n)$,  \emph{Algebra Collq.} \textbf{22} (2015),
  147--162.



\end{thebibliography}

\bigskip
Shengyong Pan

\medskip
Department of Mathematics, Beijing Jiaotong
University, 100044 Beijing, China

\smallskip
Beijing Center for Mathematics and Information Interdisciplinary Sciences, 100048 Beijing, China

\medskip
{\tt Email: shypan@bjtu.edu.cn}

\bigskip
Zhen Peng

\medskip
School of Mathematics and Statistics,
Anyang Normal University, 455000 Anyang, China

\medskip
{\tt Email: pzhen2002@163.com}

\bigskip
Jie Zhang

\medskip
School of Mathematics and Statistics, Beijing Institute of Technology, 100081 Beijing, China

\medskip
{\tt Email: jiezhang@bit.edu.cn }

\end{document}